\documentclass[review,oneside,a4paper]{amsart}     
\usepackage[foot]{amsaddr}

\usepackage{amsmath} 
\usepackage{amssymb}  
\usepackage{amsthm}

\usepackage{hyperref}
\usepackage{marginnote}
\usepackage[inline]{enumitem}
\usepackage{multicol} 
\usepackage{tikz}
\usepackage[normalem]{ulem}
\usepackage{centernot}

\usetikzlibrary{arrows,automata,positioning,backgrounds}

\tikzstyle{vecArrow} = [thick, decoration={markings,mark=at position
   1 with {\arrow[semithick]{open triangle 60}}},
   double distance=1.4pt, shorten >= 5.5pt,
   preaction = {decorate},
   postaction = {draw,line width=1.4pt, white,shorten >= 4.5pt}]
\tikzstyle{innerWhite} = [semithick, white,line width=1.4pt, shorten >= 4.5pt]

\newtheorem{theorem}{Theorem}[]
\newtheorem{lemma}[theorem]{Lemma}

\newtheorem{proposition}[theorem]{Proposition}
\newtheorem{corollary}[theorem]{Corollary}
\newtheorem{definition}[theorem]{Definition}

\newtheorem{example}[theorem]{Example}
\newtheorem{question}[theorem]{Question}

\newtheorem{rem}[theorem]{Remark}
\newenvironment{myproof}[2] {\paragraph{{\it Proof of {#1} {#2}.}}}{\hfill$\square$}

\newcommand{\calK}{\mathcal{K}}

\newcommand{\C}{\mathbb{C}}

\newcommand{\Cm}{\C_{-}}
\newcommand{\N}{\mathbb{N}}
\newcommand{\R}{\mathbb{R}}

\renewcommand{\Re}{\operatorname{Re}}
\newcommand{\ran}{\operatorname{ran}}

\newcommand{\vertiii}[1]{{\left\vert\kern-0.25ex\left\vert\kern-0.25ex\left\vert #1 
    \right\vert\kern-0.25ex\right\vert\kern-0.25ex\right\vert}}

\title[On continuity of solutions for parabolic control systems and ISS]{On continuity of solutions for parabolic control systems and input-to-state stability}
\author[B.~Jacob]{Birgit Jacob}
\address[BJ]{University of Wuppertal, School of Mathematics and Natural Sciences, Gau\ss \-str.\ 20, 42119 Wuppertal, Germany}
\email{bjacob@uni-wuppertal.de}
\author[F.L.~Schwenninger]{Felix L.~Schwenninger*}
\address[FLS]{Department of Mathematics, University of Hamburg, Bundesstr.\ 55, 20146 Hamburg, Germany}
\email{felix.schwenninger@uni-hamburg.de}
\author[H.~Zwart]{Hans Zwart}
\address[HZ]{Department of Applied Mathematics, University of Twente, P.O.\ Box 217, 7500AE Enschede, The Netherlands,  and Eindhoven University of Technology,
P.O.\ Box 513,
5600 MB Eindhoven}
\email{h.j.zwart@utwente.nl}
\date{November 1, *Corresponding author}
\begin{document}

\keywords{Abstract parabolic control system; Admissible operator; Orlicz space; Bounded functional calculus; $H^\infty$ calculus; Input-to-state stability}
\subjclass[2010]{47D06,  93C20, 93D20, 35K90}
\maketitle
\begin{abstract}
We study minimal conditions under which mild solutions of linear evolutionary control systems are continuous for arbitrary bounded input functions. This question naturally appears when working with boundary controlled, linear partial differential equations.
Here, we focus on parabolic equations which allow for operator-theoretic methods such as the holomorphic functional calculus. 
Moreover, we investigate stronger conditions than continuity leading to input-to-state stability with respect to Orlicz spaces. This also implies that the notions of input-to-state stability and integral-input-to-state stability coincide if additionally the uncontrolled equation is dissipative and the input space is finite-dimensional.
\end{abstract}


\section{Introduction}
Many evolutionary systems and  linear pde's can be modelled by abstract differential equations of the form 
\begin{equation}\label{eq1}
\Sigma(A,B):\quad\dot{x}(t)=Ax(t)+Bu(t), \quad x(0)=x_{0},
\end{equation}
where $A$ generates a strongly continuous semigroup $T(\cdot)$ on the Banach space $X$, $x_{0}\in X$ and the  control input $u:[0,t]\to U$ enters through the inhomogeneity $Bu$. Typical examples where $B$, as linear operator from $U$ to $X$, is unbounded are given by boundary control systems, see e.g.\ \cite[Chapter 11]{TucWei09}. In that case,  Eq.\ \eqref{eq1} is viewed on the extrapolation  space $X_{-1}\supset X$ a-priori and $B$ is bounded as operator from $U$ to $X_{-1}$. 
This setting immediately gives rise to the (formal) mild solution $x:[0,\infty)\to X_{-1}$,
$$x(t)=T_{-1}(t)x_{0}+\int_{0}^{t}T_{-1}(t-s)Bu(s)\,ds, \quad t>0,$$
for every input $u$ from a considered class $Z$ of $U$-valued functions, e.g., $Z(0,t;U)=L^{2}(0,t;U)$. 
Here $T_{-1}$ denotes the extension of the semigroup to the extrapolation space. This abstract lifting argument comes at a price: It is not even clear if the solution $x$ is an $X$-valued continuous function. A central point of this article is to study `minimal' conditions on $A$ and $B$ under which the solution $x$ is continuous for any $u\in Z$.
Necessary for the latter is that $x$ is $X$-valued, or equivalently that
\begin{equation}\label{cond:eq2}
\int_{0}^{t}T_{-1}(s)Bu(s)\,ds\in X\quad \forall t>0, u\in Z,
\end{equation}
provided that $Z$ is invariant under translations which will always be the case here. 
In this case, we call $B$ a {\it $Z$-admissible control operator}. 
The question now is whether $Z$-admissibility already implies that solutions $x$ are continuous. 
Admissible operators, in particularly for $Z=L^2$, have been studied intensively, e.g., 
\cite{JacoPart04,LeMerdy2003,TucWei09,Weiss89ii,Weiss89i}. 
For $Z=L^p$ with $p\in[1,\infty)$, the above question has an affirmative answer which follows rather directly. This was already shown by Weiss in \cite{Weiss89i}. Intriguingly,  the case $Z=L^\infty$ is still an open problem, see \cite[Problem 2.4]{Weiss89ii} and the discussion in \cite[Sec.~6]{JNPS16}.
 With results of the following type, we give a partial answer in the case of parabolic equations, (see Corollary \ref{cor1}, Theorem \ref{thm2}).
\begin{theorem}\label{thm11}Let $A$ generate an {\color{black}exponentially stable semigroup} $T$ on a Hilbert space $X$.
Then the following assertions are equivalent.
\begin{itemize}
\item[(i)] Any bounded, linear operator $B:\mathbb{C}\to X_{-1}$ is $L^{\infty}$-admissible.
\item[(ii)] The solutions $x$ of \eqref{eq1} are continuous $X$-valued functions, 
 for all $u\in L_{loc}^{\infty}(0,\infty;U)$, and $B\in\mathcal{L}(U,X_{-1})$ with $\dim U<\infty$.
\item[(iii)] $T$ is a {\color{black}bounded analytic} semigroup similar to a contraction semigroup.
\end{itemize}
\end{theorem}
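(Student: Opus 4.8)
The plan is to establish the cycle (ii)$\Rightarrow$(i)$\Rightarrow$(iii)$\Rightarrow$(ii). Two simple observations are used throughout. First, every linear map $\mathbb{C}\to X_{-1}$ is automatically bounded, and a bounded $B\colon U\to X_{-1}$ with $\dim U<\infty$ is a finite sum $Bu=\sum_{j}(Be_{j})u_{j}$ of scalar control operators; since finite sums of $L^{\infty}$-admissible operators are $L^{\infty}$-admissible and finite sums of continuous $X$-valued solutions are continuous, it suffices throughout to treat the scalar input space $U=\mathbb{C}$. Second, bounded analyticity of a semigroup, $L^{\infty}$-admissibility, and continuity of mild solutions are all invariant under a bounded invertible state-space transformation $Q$ (passing from $A$ to $Q^{-1}AQ$, from $B$ to $Q_{-1}^{-1}B$, and from $x$ to $Q^{-1}x$), so when dealing with (iii) we may assume $T$ is itself a bounded analytic \emph{contraction} semigroup.

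Granting these reductions, (ii)$\Rightarrow$(i) is immediate: continuity of the $X$-valued mild solution for $U=\mathbb{C}$ forces in particular $\int_{0}^{t}T_{-1}(s)Bu(s)\,ds\in X$ for all $t>0$ and all bounded $u$, which is precisely $L^{\infty}$-admissibility of $B$.

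For (iii)$\Rightarrow$(ii) I would proceed as follows. After the similarity reduction, $T$ is an exponentially stable bounded analytic contraction semigroup on the Hilbert space $X$, so $-A$ is sectorial of some angle $\omega_{A}<\pi/2$. On a Hilbert space, similarity of a bounded analytic semigroup to a contraction semigroup is equivalent to its generator admitting a bounded $H^{\infty}(\Sigma_{\nu})$ functional calculus for some $\nu\in(\omega_{A},\pi/2)$ (the solution of the similarity problem for bounded analytic semigroups, due to Le~Merdy). This is exactly the setting in which the paper's main results apply: Theorem~\ref{thm2} (and the accompanying Orlicz-space admissibility estimates of the paper) yields, for every scalar $B\in X_{-1}$, that the mild-solution map is bounded from a suitable Orlicz space $E_{\Phi}$ on $[0,T]$ into $C([0,T];X)$. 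Since $L^{\infty}(0,T)$ embeds continuously into every such $E_{\Phi}$ (finite measure), mild solutions are continuous $X$-valued functions for all $u\in L^{\infty}_{loc}$, and summing finitely many scalar contributions gives (ii) for arbitrary finite-dimensional $U$. Concretely, the mechanism is the Hilbert-space square function estimate $\int_{0}^{\infty}\|A^{1/2}T(s)y\|^{2}\,ds\simeq\|y\|^{2}$ (and its adjoint analogue), which, upon writing $B=A_{-1}y$ with $y\in X$ so that $T_{-1}(s)B=AT(s)y$ for $s>0$, controls $\int_{0}^{t}T_{-1}(s)Bu(s)\,ds$ in $X$ uniformly in $\|u\|_{\infty}$ and supplies the modulus of continuity.

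The substantial part is (i)$\Rightarrow$(iii), which I expect to be the main obstacle. I would split it in two. Step one: (i) forces $T$ to be bounded analytic. For this I would work with the dual reformulation of scalar $L^{\infty}$-admissibility as an $L^{1}$-in-time integrability of $s\mapsto\langle B,T^{*}(s)z\rangle$, uniform over $\|z\|\le1$; if $T$ is not analytic, so that the resolvent fails the parabolic bound $\|(is-A)^{-1}\|=O(1/|s|)$ near the imaginary axis, a peaking/localisation construction adapted to the offending spectral region should produce a $B\in X_{-1}$ violating this integrability, contradicting (i). Step two: given that $T$ is already bounded analytic and exponentially stable, the requirement that \emph{every} $B\in X_{-1}$ be $L^{\infty}$-admissible is, via the admissibility machinery of the paper (Corollary~\ref{cor1}), equivalent to uniform quadratic (square function) estimates for $A$ and $A^{*}$, hence --- by McIntosh's theorem on Hilbert spaces --- to $A$ having a bounded $H^{\infty}$ functional calculus on a sector of angle $<\pi/2$; by the Le~Merdy characterization this is in turn equivalent to (iii). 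The genuinely hard point is that (i) is an ``all scalar $B$, all bounded inputs'' hypothesis that is a priori much weaker than an operator-theoretic functional-calculus bound, and converting the family of admissibility inequalities into a bona fide square function estimate for $A$ is the heart of the argument.
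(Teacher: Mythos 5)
Your reductions and the directions (ii)$\Rightarrow$(i) and (iii)$\Rightarrow$(ii) match the paper's assembly of this theorem: since exponential stability gives $0\in\rho(A)$, one has $\ran A_{-1}=X_{-1}$ and $A$ has dense range, so (iii)$\Rightarrow$(ii) is exactly Corollary \ref{cor1}, and the factorization you describe through $(-A_{-1})^{1/2}$ and $(-A)^{1/2}T(\cdot)x_0$ is precisely the mechanism of Proposition \ref{theo:FCISS} and Theorem \ref{cor:sqfctest}. So far, so good.

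The gap is in (i)$\Rightarrow$(iii), which you rightly single out as the substantial direction but then leave unproved at both of its key points. First, analyticity does not need the contradiction/``peaking'' construction you sketch (and which you do not carry out): the paper obtains it directly in Proposition \ref{prop:b(A)} by feeding the admissible inputs $u(s)=e^{-(i\omega+\varepsilon)s}$ into $\Phi_{\infty}^{A_{-1}x}$, which evaluates to $(i\omega+\varepsilon-A_{-1})^{-1}A_{-1}x$ and, after closed graph plus uniform boundedness, yields the sectorial bound $\|i\omega(i\omega+\varepsilon-A)^{-1}\|\le C$ uniformly in $\omega$ and $\varepsilon$. Second, and more seriously, the conversion of ``every $b\in X_{-1}$ is $L^{\infty}$-admissible'' into a functional-calculus bound --- which you explicitly call the heart of the argument --- is exactly the step you omit. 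The paper's device (Lemma \ref{lem2}) is to get $C:=\sup_{u,t}\|\Psi_{u,t}\|<\infty$ by uniform boundedness and then to test with the unimodular input $u(s)=\exp(-i\arg\langle y,AT(s)x\rangle)$, producing the \emph{weak} square function estimate \eqref{eq4}, namely $\int_{0}^{\infty}|\langle y,AT(s)x\rangle|\,ds\le C\|x\|\,\|y\|$. Note this is an $L^{1}$-type estimate, not the quadratic estimate you invoke; by Cowling--Doust--McIntosh--Yagi it characterizes a bounded $H^{\infty}(\Sigma_{\theta})$-calculus only for $\theta>\frac{\pi}{2}$, and one still needs McIntosh's theorem on Hilbert spaces to pull the angle below $\frac{\pi}{2}$ before Le Merdy's similarity theorem applies. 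Without these two concrete test-input constructions your step (i)$\Rightarrow$(iii) remains a statement of intent rather than a proof.
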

We remark that the similarity to a contraction  in Theorem \ref{thm11}{\it (iii)} is a condition which is satisfied in many applications. It means nothing else than that $A$ is dissipative with respect to an equivalent Hilbertian norm. \\
 Identifying admissible operators is an interesting task in its own right:  by the closed graph theorem this is the same as  characterizing the operators $B\in\mathcal{L}(U,X_{-1})$ for which
$$\forall t>0\ \exists K>0:\quad\left\|\int_{0}^{t}T_{-1}(s)Bu(s)\,ds\right\| \leq K\|u\|_{Z(0,t;U)} \quad \forall u\in Z(0,t;U),$$
at least if the space $Z(0,t;U)$ is continuously embedded in $L^{1}(0,t;U)$.
In 1991,  Weiss \cite{Weiss91} posed the question whether 
$L^{2}$-admissibility, i.e.\ $Z=L^{2}$, 
 is  equivalent to  $$\sup\nolimits_{\Re\lambda>\omega_{0}}\|\sqrt{\Re\lambda}(\lambda-A)^{-1}B\|_{\mathcal{L}(U,X)}< \infty$$ for sufficiently large $\omega_{0}>0$. By setting $u(s)=e^{-s\lambda}$, the necessity of this condition is easy to see.
Counterexamples where $U$ is not a Hilbert space were already mentioned in \cite{Weiss91}. However, the question for Hilbert spaces $U$ and $X$ was the starting point of intensive research around what has become known as the \textit{Weiss conjecture}, see \cite{JacoPart04,TucWei09} for surveys. Although even in this case, counterexamples  were found \cite{JaPaPo02,JaZwa04,ZwartJacobStaffans}, there are situations with positive answers --- most prominently,  the case of contraction semigroups and $U=\mathbb{C}$, \cite{JaPa01}, in which a connection with 
deep results in complex analysis appears. 
In \cite{LeMerdy2003} Le Merdy characterized when the Weiss conjecture for bounded analytic semigroups and any space $U$ is true --- by drawing a link to the $H^{\infty}$-functional calculus. For bounded analytic semigroups on Hilbert spaces, the latter can be rephrased as follows, see \cite{LeMerdy98}: The Weiss conjecture is valid for $A$ and $A^{*}$ if and only if $A$ is similar to a contraction semigroup.

Versions of the Weiss conjecture for $Z=L^p$, $p\in[1,\infty)$ and more general spaces  have also been studied in the past, see e.g.\ \cite{BoDriEl10,haakthesis,HaLe05,HaKu07} (for the particular case of analytic semigroups). However, the somewhat `exotic' case $p=\infty$ has not gained a lot attention so far. To the best of the authors' knowledge, the only results in that direction are in \cite{BoDriEl10} and \cite{haakthesis} which imply that the Weiss conjecture for  $Z=L^{\infty}$ and any input space $U$ holds if and only if $A_{-1}$ itself is $L^{\infty}$-admissible, see Theorem \ref{thm:haakBounit}. However, we point out that the latter condition is very restrictive and still not fully understood,
see e.g.\ Proposition \ref{prop32}. 

\pagebreak[2]
Summarizing,  for the Weiss conjecture we  distinguish the following parameters:
\begin{itemize}
	\item the choice of $Z$: e.g., $Z=L^{p}$
	\item assumptions on the semigroup (bounded analytic, contraction,..)
	\item assumptions on the space $X$ (Hilbert space, reflexive,..)
	\item assumptions on the space $U$ ($\dim U<\infty$, $\dim U=\infty$)
\end{itemize}
Here, we will mainly consider bounded analytic semigroups. We will show that the assertion in Theorem  \ref{thm11} are equivalent to 
$$\text{{\it(iv)} The Weiss conjecture for }Z=L^{\infty}\text{ and any finite-dimensional }U\text{  holds true.}
 $$
 
The interest in $L^{\infty}$-admissibility {\color{black}and the existence of continuous solutions} comes from studying the notions of \textit{input-to-state stability}, well-known from finite-dimen\-sional system theory, that combine internal and external stability, in infinite-dimen\-sions. Recently, this subject has attained growing interest, see e.g.\ \cite{DaM13,JaLoRy08,KK2016IEEETac,Krstic16,MiI14b,MiWi17}. {\color{black} In particular, the existence of continuous solution is an axiom in the paper \cite{MiWi17}.} For linear systems \eqref{eq1}, input-to-state stability (ISS) is nothing else than exponential stability of the semigroup together with $L^{\infty}$-admissibility of $B$. The relation to so-called {\it integral input-to-state stability}, a variant of ISS, is more  involved. For systems of the form $\eqref{eq1}$, the following implication holds:
\begin{equation}\label{iss}
 \text{ integral input-to-state stability} \quad \implies \quad \text{input-to-state-stability}.
\end{equation}
In general, it is not known whether the converse holds in \eqref{iss}. Using the characterization of integral input-to-state stability in terms of admissibility derived in \cite{JNPS16}, 
in this paper, we show that it indeed holds in the situation of Theorem \ref{thm11}, which covers a broad class of applications. In particular, we prove (see Corollary \ref{cor22})
\begin{theorem}\label{thm:intr2}
The converse in \eqref{iss} holds for systems $\Sigma(A,B)$ provided that $B:U\to X_{-1}$ is bounded, $\dim U<\infty$ and $A$ generates an exponentially stable, analytic semigroup on a Hilbert space which is similar to a contraction semigroup.\end{theorem}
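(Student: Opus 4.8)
The plan is to obtain integral-ISS by combining the admissibility characterisation of integral-ISS from \cite{JNPS16} with the Orlicz-space admissibility results established earlier in this paper (Theorem \ref{thm2} and Corollary \ref{cor1}). First observe that, under the stated hypotheses, $T$ is exponentially stable and bounded analytic, and --- since $\dim U<\infty$ --- Theorem \ref{thm11} already guarantees that $B$ is $L^{\infty}$-admissible, so that $\Sigma(A,B)$ is ISS. The converse in \eqref{iss} will therefore follow once we show that $\Sigma(A,B)$ is moreover integral-ISS. By the characterisation in \cite{JNPS16}, for a linear system whose semigroup is exponentially stable this is equivalent to the existence of a Young function $\Phi$ for which $B$ is admissible with respect to the Orlicz space $L^{\Phi}(0,\infty;U)$; hence it suffices to exhibit one such $\Phi$.

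To produce it I would first reduce to scalar inputs: since $\dim U<\infty$, fix a basis $(e_{j})_{j=1}^{n}$ of $U$ and write $Bu=\sum_{j=1}^{n}u_{j}b_{j}$ with $b_{j}=Be_{j}\in X_{-1}$; because $\|u_{j}\|_{L^{\Phi}(0,t)}\lesssim\|u\|_{L^{\Phi}(0,t;U)}$, the triangle inequality reduces the required estimate to finding $\Phi$ and a constant $K$, independent of $t$, with $\left\|\int_{0}^{t}T_{-1}(s)b\,v(s)\,ds\right\|_{X}\le K\|v\|_{L^{\Phi}(0,t)}$ for every $b\in X_{-1}$ and every scalar function $v$. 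Next, as $A$ is similar to a contraction semigroup, replacing the inner product of $X$ by an equivalent one makes $A$ dissipative; it then still generates an exponentially stable, bounded analytic semigroup and hence possesses a bounded $H^{\infty}$-functional calculus, while the change of norm affects $X_{-1}$, the notion of admissibility, and the Orlicz estimate only up to equivalent constants. With these normalisations, the desired scalar estimate with an explicit $\Phi$ is precisely the content of Theorem \ref{thm2} (see also Corollary \ref{cor1}); exponential stability is what lets one pass from the finite-interval bounds to a bound uniform in $t$, as required for admissibility on $[0,\infty)$. Feeding this $\Phi$ into the \cite{JNPS16} characterisation then yields integral-ISS, and together with \eqref{iss} the asserted equivalence.

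Granting the paper's main results, the argument just sketched is short, so I regard the real obstacle as Theorem \ref{thm2} itself: showing that analyticity together with similarity to a contraction forces the input map $v\mapsto\int_{0}^{\cdot}T_{-1}(s)b\,v(s)\,ds$ to be bounded not merely from $L^{\infty}$ but already from a strictly smaller Orlicz space into $X$. That the extra structure is genuinely needed is clear, since plain $L^{\infty}$-admissibility together with exponential stability does \emph{not} imply integral-ISS in general --- this is exactly why the converse in \eqref{iss} is open without hypotheses. The route is to represent the orbit integral through the holomorphic functional calculus of $A$, to dominate the resulting contour and square-function integrals by means of the bounded $H^{\infty}$-calculus on the Hilbert space $X$, and finally to convert the ensuing $L^{2}$- or Hardy-space estimate for the Laplace transform of $v$ into the Orlicz norm $\|v\|_{L^{\Phi}}$, where the complementary Young function of $\Phi$ enters through H\"older's inequality in Orlicz spaces. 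The remaining steps --- the reduction to $U=\C$, the change of norm, and the appeal to \cite{JNPS16} --- are routine.
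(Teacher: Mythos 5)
Your argument follows the paper's own route (Corollary \ref{cor22}): reduce to scalar inputs via Proposition \ref{prop:finitedim}/\ref{prop2:finitedim}, use similarity to a contraction to obtain the bounded $H^\infty$-calculus, hence the square-function hypotheses, apply the Orlicz-admissibility result to every $b\in\ran A_{-1}=X_{-1}$ (exponential stability gives $0\in\rho(A)$ and dense range), and conclude with the $E_{\Phi}$-characterization of integral ISS from \cite{JNPS16} (Theorem \ref{thmjnps}). One correction: the Orlicz upgrade you rely on is Theorem \ref{cor:sqfctest}/Corollary \ref{cor1}, proved via the factorization and Orlicz--H\"older argument of Proposition \ref{theo:FCISS}, not Theorem \ref{thm2} --- the latter concerns only $L^{\infty}$-admissibility (and its converse characterization), which, as you yourself observe, is not enough to reach integral ISS.
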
 
Theorem \ref{thm:intr2}  generalizes results for parabolic diagonal systems derived in \cite{JNPS16}. 
We will further discuss how Theorems \ref{thm11} and \ref{thm:intr2} can be generalized to more general spaces $X$, and how the exponential stability can be weakened. 

In Section \ref{Sec2} we give sufficient conditions such that 
 continuity of mild solutions
 holds for the extremal set of  all input operator $B$ with finite-dimensional input space. Moreover, in this situation, we even obtain admissibility with respect to Orlicz spaces, which is a stronger property. This enables us to infer consequences for the converse of \eqref{iss}, Section \ref{Sec5}.
Section \ref{Sec3} deals with optimality of the conditions supposed in Secion \ref{Sec2} --- this is done by establishing the converse of the results  in Section \ref{Sec2} in terms of the $H^{\infty}$-calculus.
 
  In Section \ref{Sec4} we elaborate on the relation of the results to the Weiss conjecture. Finally, we conclude with an outlook, Section \ref{Sec6}, including a detailed  discussion of related (open) problems, 
 which may be of interest in their own right.

\subsection{Notions}
In the following we will always denote the generator of a $C_{0}$-semi\-group $T$ on a Banach space $X$ by $A$.  We will consider the spaces $X_{-1}^{A}$ and $X_{1}^{A}$ which are defined by the completion of $X$ using the norm $\|(\beta-A)^{-1}\cdot\|$ for some $\beta\in\rho(A)$ and by equipping $D(A)$ with the graph norm of $A$, respectively. If the operator $A$ is clear from the context, we will simply write $X_{-1}$ and $X_{1}$. By $R(\lambda,A)$ we denote the resolvent $(\lambda-A)^{-1}$.
If the dual operator $A^{*}:D(A^{*})\subset X'\to X'$ generates a strongly continuous semigroup --- that is, if $D(A^{*})$ is dense, e.g.\ when $X$ is reflexive ---
 then  
 $$(X_{-1}^{A})'\simeq X^{A^{*}}_{1}\quad\text{and}\quad(X_{1}^{A})'\simeq X_{-1}^{A^{*}},$$
   see e.g.\ \cite[p.~43-46]{vanNeerven} or \cite{Weiss89i}. If moreover $X$ is reflexive, we have that an element $x\in X_{-1}^{A}$ lies in $X$ if and only if the evaluation functional
\begin{equation}\label{eqfx}
f_{x}:X_{1}^{A^{*}}\rightarrow \C, \ y\mapsto \langle y, x\rangle_{X_{1}^{A^{*}}, X_{-1}^{A}},
\end{equation}
can be continuously extended to $X'$. We denote the  extension of $A$ to $X_{-1}^{A}$ by $A_{-1}$ and the $C_{0}$-semigroup generated by $A_{-1}$ by $T_{-1}$.

For Banach spaces $X,Y$, the bounded operators from $X$ to $Y$ will be denoted by $\mathcal{L}(X,Y)$. 
 A semigroup $T$ is called bounded analytic semigroup if it can be extended to a sector $\{0\}\cup\Sigma_{\alpha}$ where $\Sigma_{\alpha}=\{z\in\mathbb{C}\setminus\{0\}:|\arg(z)|<\alpha\}$, $\alpha\in(0,\pi]$, such that $T$ is bounded and analytic on $\Sigma_{\alpha}$. By an \textit{exponentially stable analytic semigroup} we refer to bounded analytic semigroup which in addition is exponentially stable on $[0,\infty)$.
 There is a natural correspondence between bounded analytic semigroups and sectorial operators. In fact, bounded analytic semigroups are characterized by the property that there exists a $\omega\in(0,\frac{\pi}{2})$ such that
 $$\sigma(-A)\subset \{0\}\cup\Sigma_{\omega}  \quad \text{and }\quad \sup_{z\in \C\setminus \overline{\Sigma_{\omega'}}}\|z (z+A)^ {-1}\|<\infty,\quad \forall\omega'\in(\omega,\pi).$$
 Operators $-A$ of the latter form are called \textit{sectorial} (of angle less than $\frac{\pi}{2}$).
For sectorial operators $-A$, the holomorphic functional calculus is a well-studied subject. Very roughly speaking, this calculus is a way to make sense of ``$f(-A)$'' for scalar-valued functions that are holomorphic on a domain that ``strictly contains'' the spectrum of $-A$. This is done by using an operator version of the Cauchy formula, the Riesz-Dunford integral,
$$f(-A)=\frac{1}{2\pi i}\int_{\partial \Sigma_{\omega'}}f(z)(\lambda+A)^{-1}\,dz,$$
where $\omega<\omega'<\theta$ and $f$ is a bounded analytic function on $\Sigma_{\theta}$ decaying suitably at $0$ and $\infty$. This construction can be extended to the whole of $H^{\infty}(\Sigma_{\theta})$, the space of bounded analytic functions on $\Sigma_{\theta}$, but will in general lead to unbounded operators $f(-A)$. If indeed $f(-A)\in \mathcal{L}(X)$ for all $f\in H^{\infty}(\Sigma_{\theta})$, then the $H^{\infty}(\Sigma_{\theta})$-calculus is called \textit{bounded}. For a detailed description of the construction we refer to the excellent monograph by Haase \cite{HaaseBook}.
 When dealing with analytic semigroups, it is controversial whether $``A$'' or $``-A$'' denotes the generator --- the latter being common in the study of maximal regularity. In this paper, we have decided to stick to the convention ``$A$'' as this is the usual choice in systems theory. 

{\color{black} Let $U$ be a Banach space and $I\subset[0,\infty)$ be an interval. In this paper the function space $Z(I;U)$ --- using the notation $Z(I)=Z(I;\C)$ for $U=\C$ --- will always refer to either of the following Banach spaces of $U$-valued functions: the continuous functions ${\rm C}(I;U)$, $L^{\infty}(I;U)$, $L^{1}(I;U)$ or an $U$-valued Orlicz space $E_{\Phi}(I;U)$ with respect to the Lebesgue measure on the interval $I$. This includes the $L^{p}$-spaces with $p\in(1,\infty)$. Let us briefly introduce the spaces $E_{\Phi}(I;U)$ --- a detailed exposition may e.g.\ be found in \cite{Kufner}. In what follows, let $\Phi:[0,\infty)\to[0,\infty)$ be a Young function, i.e.\ $\Phi$ is continuous, convex and increasing with $\lim_{x\to0}\frac{\Phi(x)}{x}=0$ and $\lim_{x\to\infty}\frac{\Phi(x)}{x}=\infty$. Then the set of Bochner measurable functions $f:I\to U$ such that there exists $k>0$ with $\Phi(\|kf(\cdot)\|)$ integrable becomes a Banach space, denoted by $L_{\Phi}(I;U)$, with the norm
\begin{equation}\label{orlicznorm}
\|u\|_{L_{\Phi}(I;U)}=\inf\left\{k>0\colon \int_{I}\Phi\left(\frac{\|u(s)\|}{k}\right)\,ds\leq 1\right\}.
\end{equation}
Although $L_{\Phi}$ is commonly referred to as {\it Orlicz space} in the literature, we rather would like to call the closure of boundedly   supported $L^{\infty}(I;U)$ functions,
$$E_{\Phi}(I;U)=\overline{\{f\in L^{\infty}(I;U)\colon {\rm ess~supp}(f) \text{ bounded}\}}^{\|\cdot\|_{L_{\Phi}(I;U)}},$$
the {\it Orlicz space with Young function $\Phi$} and let $\|\cdot\|_{E_{\Phi}(I;U)}=\|\cdot\|_{L_{\Phi}(I;U)}$. Note that the inclusion $E_{\Phi}(I;U)\subset L_{\Phi}(I;U)$ is strict in general, but, for example,  becomes an equality  in the case of  $L^{p}$ spaces, $p\in(1,\infty)$. Also note that for any $u\in E_{\Phi}(I;U)$, $\Phi\circ\|u(\cdot)\|$ is integrable.
}
Note that the following definition of {\it $Z$-admissible operators} particularly covers the common case of $L^{2}$-admissible operators.
\begin{definition}[Admissibility]\label{def:adm}
Let $U,Y$ be Banach spaces and let $Z$ be either $E_{\Phi}$, $L^{1}$, $L^\infty$ or ${\rm C}$  
(see the notation above). 
 \begin{enumerate}\item An operator $B\in \mathcal{L}(U,X_{-1})$ is called {\it (finite-time) $Z$-admissible (control) operator} for $A$ if, for all $t>0$, the operator
 \begin{align*}
 \Phi_{t}:={}&\Phi_{t}^{B}: Z(0,t;U)\rightarrow X_{-1}, u\mapsto \int_{0}^{t}T_{-1}(s)Bu(s)\,ds\\
 \intertext{has range in $X$, i.e. $\ran \Phi_{t}\subset X$ and is thus bounded from $Z(0,t;U)$ to $X$.
  \item An operator $C\in\mathcal{L}(X_{1},Y)$ is called (finite-time) $Z$-admissible observation operator for $A$ if, for all $t>0$,
 }
 \Psi_{t}:={}&\Psi_{t}^{C}:X_{1}\rightarrow Z(0,t;Y), x\mapsto CT(\cdot)x
  \end{align*}
  has a bounded extension to $X$, which we denote again by $\Psi_t$. 
  \end{enumerate}
  If  $\sup_{t>0}\|\Phi_{t}\|<\infty$ or $\sup_{t>0}\|\Psi_{t}\|<\infty$, then $B$  or $C$ is called infinite-time $Z$-admissible, respectively, in which case we write $\|B\|_{adm}:=\sup_{t>0}\|\Phi_{t}\|_{\mathcal{L}(Z,X)}$ and $\|C\|_{adm}:=\sup_{t>0}\|\Psi_{t}\|$.
    \end{definition}
    {\color{black}
Note that $B\in\mathcal{L}(U,X_{-1})$ is $Z$-admissible if and only if $\ran\Phi_{t}\subset X$ for some $t>0$. This can be seen using the semigroup property and that $u\in Z(0,t;U)$ implies $u(\tau+\cdot)|_{[0,s]}\in Z(0,s;U)$ for all $\tau\geq0, s>0$ and where $u$ is identified with its extension on $\R$ by zero. It is easily seen that the latter holds true for our choices of $Z$.
}
 For exponentially stable $C_{0}$-semigroups, finite-time $Z$-admissibility is equivalent to infinite-time $Z$-admissibility, see \cite[Lemma 8]{JNPS16}. 
The following proposition confirms the intuition that the results which hold for input spaces $U=\C$ generalize to finite-dimensional spaces $U$ --- for choices like $Z=L^2$ this is folklore.

\begin{proposition}\label{prop:finitedim}
Let $U$ be a finite dimensional Banach space and let $Z$ be either  $E_{\Phi}$, $L^{1}$, $L^\infty$ or ${\rm C}$. Then for any semigroup generator $A$ and $B\in\mathcal{L}(U,X_{-1})$ it holds that
$B$ is {\color{black}(infinite-time)} $Z$-admissible if and only if $Bf$ is  {\color{black}(infinite-time)} $Z$-admissible for every $f\in U$.
\end{proposition}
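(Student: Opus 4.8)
The plan is to prove the two implications separately; the forward one is immediate, and the converse is a basis decomposition.

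For the necessity, suppose $B$ is $Z$-admissible and fix $f\in U$, identifying $Bf\in X_{-1}$ with the operator $\C\to X_{-1}$, $\lambda\mapsto\lambda Bf$. The first observation is that pointwise multiplication $u\mapsto u(\cdot)f$ maps $Z(0,t;\C)$ into $Z(0,t;U)$: this is transparent for $Z\in\{{\rm C},L^\infty,L^1\}$, and for $Z=E_{\Phi}$ it follows since this map sends bounded, boundedly supported scalar functions to bounded, boundedly supported $U$-valued ones and is bounded for $\|\cdot\|_{L_{\Phi}}$. Since $T_{-1}(s)(Bf)u(s)=T_{-1}(s)B\big(u(s)f\big)$, we obtain $\Phi_{t}^{Bf}u=\Phi_{t}^{B}\big(u(\cdot)f\big)\in X$, so $Bf$ is $Z$-admissible. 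For the infinite-time statement one uses in addition that $\|u(\cdot)f\|_{Z(0,t;U)}=\|f\|_{U}\,\|u\|_{Z(0,t;\C)}$ for each of these $Z$ — for $E_{\Phi}$ via the substitution $k\mapsto\|f\|_{U}k$ in the infimum in \eqref{orlicznorm} — whence $\|Bf\|_{adm}\le\|f\|_{U}\,\|B\|_{adm}<\infty$.

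For the sufficiency, assume $Bf$ is $Z$-admissible for every $f\in U$. I would fix a basis $f_{1},\dots,f_{n}$ of $U$ with dual basis $f_{1}^{*},\dots,f_{n}^{*}\in U'$ and, for $u\in Z(0,t;U)$, decompose $u=\sum_{k=1}^{n}u_{k}f_{k}$ with coordinate functions $u_{k}:=f_{k}^{*}\circ u$. The key routine point is that $u_{k}\in Z(0,t;\C)$ with $\|u_{k}\|_{Z(0,t;\C)}\le\|f_{k}^{*}\|\,\|u\|_{Z(0,t;U)}$; this rests on the pointwise bound $|u_{k}(s)|\le\|f_{k}^{*}\|\,\|u(s)\|$ together with monotonicity of each of the four norms under pointwise domination of the modulus (for $E_{\Phi}$ again using the homogeneity of the Luxemburg-type norm). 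Since all these spaces embed continuously into $L^{1}(0,t;U)$ and $s\mapsto T_{-1}(s)Bf_{k}$ is continuous on $[0,t]$, each $s\mapsto u_{k}(s)T_{-1}(s)Bf_{k}$ is Bochner integrable in $X_{-1}$ and one may split the integral:
\begin{align*}
\Phi_{t}^{B}u=\int_{0}^{t}T_{-1}(s)Bu(s)\,ds&=\sum_{k=1}^{n}\int_{0}^{t}T_{-1}(s)(Bf_{k})u_{k}(s)\,ds\\
&=\sum_{k=1}^{n}\Phi_{t}^{Bf_{k}}u_{k},
\end{align*}
which lies in $X$ by hypothesis. Hence $\ran\Phi_{t}^{B}\subset X$, i.e.\ $B$ is $Z$-admissible; in the infinite-time case the estimate $\|\Phi_{t}^{B}u\|\le\sum_{k}\|f_{k}^{*}\|\,\|Bf_{k}\|_{adm}\,\|u\|_{Z(0,t;U)}$ is uniform in $t>0$, giving $\|B\|_{adm}\le\sum_{k}\|f_{k}^{*}\|\,\|Bf_{k}\|_{adm}<\infty$.

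I do not expect a genuine obstacle here — the argument is essentially bookkeeping. The only two points that need a careful (but short) justification are the behaviour of the Orlicz norm \eqref{orlicznorm} under pointwise scaling and domination of the modulus, and the interchange of the finite sum with the $X_{-1}$-valued Bochner integral, which is legitimate thanks to the continuous embedding $Z(0,t;U)\hookrightarrow L^{1}(0,t;U)$ on the finite interval and the continuity of the orbit maps $s\mapsto T_{-1}(s)Bf_{k}$.
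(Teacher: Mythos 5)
Your argument is correct and follows essentially the same route as the paper: the forward direction via boundedness of $u\mapsto u(\cdot)f$ from $Z(0,t;\C)$ to $Z(0,t;U)$, and the converse via a basis decomposition $u=\sum_k u_k e_k$ and splitting of the integral. The only (harmless) deviation is in the infinite-time converse, where the paper invokes the uniform boundedness principle, whereas you obtain the uniform bound directly from the dual-basis estimate $\|u_k\|_{Z(0,t;\C)}\le\|f_k^*\|\,\|u\|_{Z(0,t;U)}$ together with $\sup_{t>0}\|\Phi_t^{Bf_k}\|<\infty$ — a slightly more explicit but equivalent bookkeeping.
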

\begin{proof}
Let  $t>0$.  For $f\in U$
, $u\mapsto u\cdot f$ is  continuous from $Z(0,t;\C)$ to $Z(0,t;U)$.
Thus, if  $\Sigma(A,B)$ is  (infinite-time) $Z$-admissible  then $\Sigma(A,Bf)$ is  (infinite-time) $Z$-admissible. 
In order to prove the converse implications, we choose a basis $e_1,\dots, e_n$ of $U$.
Assume that $\Sigma(A,Bf)$ is $Z$-admissible for every $f\in U$. Let $t>0$ and $u\in Z(0,t;U)$.
Then  $u$ can be written as $u=\sum_{k=1}^n u_k e_k$ with $u_k\in Z(0,t)$ for $k=1,\dots,n$ and
\begin{equation}\label{eq:prop4} \int_0^t T_{-1}(s)B u(s)\, ds = \sum_{k=1}^n \int_0^t T_{-1}(s)Be_k u_k(s)\, ds \in X,\end{equation}
which implies that  $\Sigma(A,B)$ is $Z$-admissible. 
{\color{black} That the implication also holds true for infinite-time admissibility, now follows from the uniform boundedness principle. In fact, through \eqref{eq:prop4}, pointwise boundedness of the family  $(\Phi_{t}^{B})_{t>0}\subset\mathcal{L}(Z(0,t;U),X)$ is implied by pointwise boundedness of $({\Phi}_{t}^{Be_{k}})_{t>0,k\in\{1,..,n\}}$, where the notation from  Definition \ref{def:adm} is used.
}
\end{proof}
Note that Proposition \ref{prop:finitedim} does not generalize to the case where $U$ is infinite-dimensional, see e.g.\ Proposition \ref{prop32} or \cite{JaZwa04} (for the special cases of $Z=L^{2}$).

 \section{Orlicz space admissibility for finite-dimensional input spaces}\label{Sec2}
As mentioned in the introduction, if $B$ is $L^p$-admissible with $p\in[1,\infty)$, then the mild solutions of \eqref{eq1} are continuous. The analogous result holds for $Z$-admissibility if $Z$ is some Orlicz space, see Proposition \ref{prop11} below.
In this section we give sufficient conditions for such $Z$-admissibility. 
Note that there exist operators that are $L^{\infty}$-admissible but not $L^{p}$-admissible for any $p<\infty$, \cite[Ex.~5.2]{JNPS16}.
\begin{proposition}\label{prop11}
Let $X,U$ be Banach spaces and let $A$ generate a $C_{0}$-semigroup on $X$. If $B:U\to X_{-1}$ is $E_{\Phi}$-admissible for some Young function $\Phi$, then $B$ is $L^{\infty}$-admissible and the mild solution $x:[0,\infty)\to X$ of \eqref{eq1} is continuous for any $u\in L_{loc}^{\infty}(0,\infty;U)$ and any initial value $x_{0}\in X$.
\end{proposition}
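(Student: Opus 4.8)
\emph{Plan.} The whole statement reduces to two elementary facts about the scalar Orlicz space on a short interval: (a) if $\|u\|_{\infty}\le M$ and $u$ is supported on an interval of length $\ell$, then $\|u\|_{E_{\Phi}}\le M/\Phi^{-1}(1/\ell)$, which tends to $0$ as $\ell\to0^{+}$; this is immediate from \eqref{orlicznorm}, since $\ell\,\Phi(M/k)\le1$ whenever $k\ge M/\Phi^{-1}(1/\ell)$, and $\Phi^{-1}(y)\to\infty$ as $y\to\infty$; and (b) the shift group acts strongly continuously on $E_{\Phi}(\R;U)$. Granting these, the $L^{\infty}$-admissibility is immediate: by (a), $L^{\infty}(0,t;U)\hookrightarrow E_{\Phi}(0,t;U)$ continuously, and composing this embedding with the operator $\Phi_{t}=\Phi_{t}^{B}$ of Definition \ref{def:adm}, which is bounded from $E_{\Phi}(0,t;U)$ to $X$ by hypothesis, shows $\Phi_{t}^{B}$ is bounded from $L^{\infty}(0,t;U)$ to $X$ for every $t>0$.

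For continuity I would write $x(t)=T(t)x_{0}+F(t)$ with $F(t)=\int_{0}^{t}T_{-1}(t-s)Bu(s)\,ds$; since $T(\cdot)x_{0}$ is continuous it suffices to prove $F$ is continuous on an arbitrary $[0,R]$. Replacing $u$ by $u\,\mathbf 1_{(0,R)}\in L^{\infty}(0,R;U)$ leaves $F$ unchanged on $[0,R]$, and the substitution $\sigma=t-s$ gives, for $\tau\in[0,R]$,
$$F(\tau)=\int_{0}^{R}T_{-1}(\sigma)Bv_{\tau}(\sigma)\,d\sigma=\Phi_{R}^{B}(v_{\tau}),\qquad v_{\tau}(\sigma)=\begin{cases}u(\tau-\sigma),&0<\sigma<\tau,\\[1mm]0,&\text{otherwise},\end{cases}$$
where each $v_{\tau}$ is bounded with support in $(0,R)$, hence lies in $E_{\Phi}(0,R;U)$, and $\Phi_{R}^{B}(v_{\tau})\in X$ by $E_{\Phi}$-admissibility. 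Since $\Phi_{R}^{B}\in\mathcal L(E_{\Phi}(0,R;U),X)$, it remains to check that $\tau\mapsto v_{\tau}$ is continuous into $E_{\Phi}(0,R;U)$. For $0\le\tau_{1}\le\tau_{2}\le R$ and $h=\tau_{2}-\tau_{1}$ I would split $v_{\tau_{2}}-v_{\tau_{1}}$ into its restriction to $(\tau_{1},\tau_{2})$ — bounded by $\|u\|_{L^{\infty}(0,R)}$ and supported on an interval of length $h$, hence of $E_{\Phi}$-norm $\to 0$ by (a) — and its restriction to $(0,\tau_{1})$, which under the measure-preserving change of variables $\sigma\mapsto\tau_{1}-\sigma$ coincides with a restriction of $\bar u(\cdot+h)-\bar u$, where $\bar u$ is the extension of $u\,\mathbf 1_{(0,R)}$ by zero to $\R$; by (b) this tends to $0$ in $E_{\Phi}(\R;U)$ as $h\to0$. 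Hence $F|_{[0,R]}$, and therefore $x$, is continuous into $X$.

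The only non-routine ingredient is (b): strong continuity of translation on the Orlicz space $E_{\Phi}(\R;U)$. I would obtain it from the density in $E_{\Phi}(\R;U)$ of compactly supported continuous (or simple) functions — which is essentially built into the definition of $E_{\Phi}$ as the $\|\cdot\|_{L_{\Phi}}$-closure of boundedly supported bounded functions, via an application of (a) and dominated convergence — on which shift-continuity is clear from uniform continuity and (a), while the shifts are isometries, so the standard approximation argument closes it. This is exactly the property that fails for $Z=L^{\infty}$ (and for $L_{\Phi}$ in place of $E_{\Phi}$), which is why this line of argument does not settle the $L^{\infty}$-case; the remainder of the proof is bookkeeping with \eqref{orlicznorm} and the identity $F(\tau)=\Phi_{R}^{B}(v_{\tau})$.
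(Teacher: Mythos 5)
Your argument is correct, but it is genuinely more self-contained than the paper's, which disposes of the proposition in two sentences by citation: the paper notes that $E_{\Phi}$-admissibility implies \emph{zero-class} $L^{\infty}$-admissibility, i.e.\ $\lim_{t\to 0^{+}}\|\Phi_{t}\|_{\mathcal{L}(L^{\infty}(0,t;U),X)}=0$ (by \cite[Thm.~3.1, Prop.~2.12]{JNPS16}), and then invokes \cite[Prop.~2.4]{JNPS16}, which asserts that zero-class $L^{\infty}$-admissibility already yields continuity of all mild solutions. Your estimate (a) is precisely the quantitative content of the first of these steps: it shows that the norm of the embedding $L^{\infty}(0,\ell;U)\hookrightarrow E_{\Phi}(0,\ell;U)$ is at most $1/\Phi^{-1}(1/\ell)\to 0$, so composing with the $E_{\Phi}$-bound on $\Phi_{t}^{B}$ in fact gives zero-class $L^{\infty}$-admissibility, not merely the $L^{\infty}$-admissibility you record. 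Where you genuinely diverge is in deducing continuity: rather than passing through the intermediate notion, you write $F(\tau)=\Phi_{R}^{B}(v_{\tau})$ and reduce everything to continuity of $\tau\mapsto v_{\tau}$ into $E_{\Phi}(0,R;U)$, whose only non-routine ingredient is strong continuity of translations on $E_{\Phi}(\R;U)$; this rests on density of compactly supported continuous (or simple) functions in $E_{\Phi}$, which you correctly flag and which does hold --- a Lusin- or simple-function approximation combined with your estimate (a) applied to supports of small measure rather than short intervals. Both routes are sound: yours buys independence from \cite{JNPS16} at the price of the translation-continuity lemma, while the paper's reduction keeps the proof to two lines; and your closing observation --- that translation continuity is exactly what fails for $L^{\infty}$ and for $L_{\Phi}$ in place of $E_{\Phi}$ --- correctly locates why neither argument settles the open $L^{\infty}$-case.
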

\begin{proof}
This follows from \cite[Prop.~2.4]{JNPS16} and the fact that $E_{\Phi}$-admissibility of $B$ implies that $B$ is zero-class $L^{\infty}$-admissibility, that is, $B$ is $L^{\infty}$-admissible and $\lim_{t\to0^{+}}\|\Phi_{t}\|=0$ with $\Phi_t$ from Definition \ref{def:adm}. The latter can be argued by \cite[Thm.~3.1 and Prop.~2.12]{JNPS16}.
\end{proof}
The next technical result is at the core of what follows.  Recall that although $E_{\Phi}$-admissibility always implies $L^{\infty}$-admissibility, the corresponding implication is no longer true for {\it infinite-time} admissibility, see e.g.\ \cite{NaSchwe17}.

\begin{proposition}\label{theo:FCISS}
Let $A$ generate a bounded $C_{0}$-semigroup $T$ on a Banach space $X$  and let $x_{0}\in X$. Suppose that 
$(-A_{-1})^{\frac{1}{2}}$ is an infinite-time $L^{2}$-admissible control operator, and
let $f=(-A)^{\frac{1}{2}}T(\cdot)x_{0}$ either satisfy
$$\text{
\begin{enumerate*}[label=(\alph*)]
\item $f\in L^{2}(0,\infty;X)$, \hspace{0.1cm} or \hspace{0.1cm}
\item $\|f(\cdot)\|_{X}^{2}\in L_{\Psi}(0,\infty)$  for some Young function $\Psi$.
\end{enumerate*}}$$
If {\it (a)} holds, then $B=A_{-1}x_{0}$ is infinite-time $L^{\infty}$-admissible. \\If {\it (b)} holds, then there exists a Young function $\Phi$ and $C>0$ such that 
\begin{equation*}
\left\|\int_{0}^{t}T_{-1}(s)A_{-1}x_{0} u(s)\,ds \right\|_{X} \leq C\|u\|_{E_{\Phi}(0,t)},\quad \forall u\in E_{\Phi}(0,t),t>0.
\end{equation*}
Thus $B=A_{-1}x_{0}\in\mathcal{L}(\C,X_{-1})$ is infinite-time $E_{\Phi}$-admissible.

\end{proposition}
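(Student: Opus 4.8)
The plan is to reduce the computation of $\Phi_t^{B}u = \int_0^t T_{-1}(s)A_{-1}x_0\,u(s)\,ds$ to an expression involving only $(-A_{-1})^{1/2}$ acting on something and the admissible operator $(-A_{-1})^{1/2}$, via an integration by parts that moves one half-power of $-A$ onto the data $x_0$. Formally, write $A_{-1}x_0 = (-A_{-1})^{1/2}\bigl((-A)^{1/2}x_0\bigr)$ when $x_0\in D((-A)^{1/2})$; but in general $x_0$ is only in $X$, so the correct object is $(-A)^{1/2}T(s)x_0 = f(s)$, which is defined for $s>0$ and, by hypothesis (a) or (b), has good integrability. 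The key identity I would establish is, for $u$ smooth and compactly supported in $(0,t]$ (or constant, as in the resolvent test),
\begin{equation*}
\int_0^t T_{-1}(s)A_{-1}x_0\,u(s)\,ds = \int_0^t (-A_{-1})^{1/2}T_{-1}(s)\,f(s)\,u(s)\,ds + (\text{boundary/lower-order terms}),
\end{equation*}
using $A_{-1}x_0 = (-A_{-1})^{1/2}\,(-A)^{1/2}x_0$ and pushing the lower half-power through the semigroup; more carefully one splits $T_{-1}(s) = T(s/2)T_{-1}(s/2)$ and uses that $(-A)^{1/2}T(s/2)$ is a bounded operator on $X$ for $s>0$ while $T_{-1}(s/2)A_{-1}x_0$ can be handled by the fundamental theorem of calculus, $T_{-1}(r)A_{-1}x_0 = \frac{d}{dr}T_{-1}(r)x_0$, to produce $f$.

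Once the integrand is recognized as $(-A_{-1})^{1/2}T_{-1}(s)g(s)$ for a suitable $X$-valued function $g$ built from $f$ and $u$ (essentially $g(s)$ comparable to $f(s)u(s)$ up to the bookkeeping above), I would invoke the assumed infinite-time $L^2$-admissibility of $(-A_{-1})^{1/2}$. That admissibility says precisely that $v\mapsto \int_0^t (-A_{-1})^{1/2}T_{-1}(s)v(s)\,ds$ is bounded from $L^2(0,t;X)$ to $X$ with norm uniform in $t$. So in case (a), since $f\in L^2(0,\infty;X)$ and $u\in L^\infty$, the product $fu$ lies in $L^2(0,t;X)$ with norm $\le \|f\|_{L^2(0,\infty;X)}\|u\|_{L^\infty(0,t)}$, which after the reduction gives $\|\Phi_t^{B}u\|_X \le C\|u\|_{L^\infty(0,t)}$ with $C$ independent of $t$ — i.e. infinite-time $L^\infty$-admissibility of $B=A_{-1}x_0$. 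The lower-order boundary terms from the integration by parts should be estimated separately and more easily (e.g. a term like $T_{-1}(t)x_0\,u(t)$ type contributions, or integrals of $\|f\|$ against $u$), using boundedness of $T$ and Cauchy–Schwarz.

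In case (b), the same reduction applies but now the relevant norm on $fu$ must be an Orlicz norm tuned so that (i) $\|fu\|$ is controlled by $\|u\|_{E_\Phi}$, and (ii) the admissibility estimate for $(-A_{-1})^{1/2}$, which is $L^2$, still applies to $fu$. The mechanism is a Hölder-type inequality for Orlicz spaces: given that $\|f(\cdot)\|_X^2 \in L_\Psi(0,\infty)$, one wants a Young function $\Phi$ (complementary-type construction from $\Psi$) such that $u\in E_\Phi$ forces $fu\in L^2(0,t;X)$ with $\|fu\|_{L^2} \le C\|u\|_{E_\Phi}\,\|\,\|f\|^2\|_{L_\Psi}^{1/2}$ uniformly in $t$. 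Concretely, if $\Psi$ has complementary Young function $\Psi^*$, one takes $\Phi$ so that $\Phi(r) $ is comparable to $\Psi^*(r^2)$ (or builds $\Phi$ via $\Phi^{-1}(s)\Psi^{-1}(s)\gtrsim s^{1/2}$ after reparametrization) — this is exactly the Orlicz analogue of the statement ``$L^2\cdot L^{2p'} \hookrightarrow L^{2}$ when the first factor squared is in $L^{p}$.'' Then by the generalized Hölder inequality in Orlicz spaces, $\int_0^t \|f(s)\|_X^2 |u(s)|^2\,ds \le 2\,\big\|\,\|f\|_X^2\,\big\|_{L_\Psi}\,\big\|\,|u|^2\,\big\|_{L_{\Psi^*}} \le C\|u\|_{E_\Phi}^2$, and the reduction finishes as in (a). Infinite-time $E_\Phi$-admissibility then follows since all constants are $t$-independent.

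The main obstacle, I expect, is making the integration-by-parts / half-power manipulation rigorous at the level of $X_{-1}$ rather than merely formal: one must justify that $(-A_{-1})^{1/2}T_{-1}(s)x_0$ equals $T(s/2)(-A)^{1/2}T_{-1}(s/2)x_0$ (or an analogous splitting) as genuine $X$-valued functions with the claimed integrability, handle the singularity at $s=0$ (where $\|f(s)\|$ and $\|(-A)^{1/2}T(s)\|$ can blow up like $s^{-1/2}$), and show the passage from nice dense $u$ to all of $E_\Phi(0,t)$ (resp. $L^\infty$) survives the limit — using that $E_\Phi$-admissibility is tested on a dense class and that $E_\Phi$ is the closure of boundedly supported $L^\infty$ functions. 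A secondary but genuinely delicate point is the explicit construction of the Young function $\Phi$ from $\Psi$ and the verification that it is a bona fide Young function (convex, superlinear at $\infty$, sublinear at $0$); I would isolate this as a small lemma about Orlicz spaces and the map $\Psi \mapsto \Phi$ with $\Phi(r)\sim \Psi^*(r^2)$, checking the defining properties directly.
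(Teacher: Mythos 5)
Your proposal follows essentially the same route as the paper: factor $A_{-1}x_0$ through $(-A_{-1})^{\frac{1}{2}}$ via the splitting $T_{-1}(s)=T(\tfrac{s}{2})T_{-1}(\tfrac{s}{2})$ so that $\Phi_t^{B}u=-\tfrac12\Phi_{t/2}^{(-A_{-1})^{1/2}}\bigl(u(2\cdot)f\bigr)$, then apply the assumed infinite-time $L^{2}$-admissibility to the product $fu$, estimating its $L^{2}$-norm by $\|u\|_{L^\infty}\|f\|_{L^{2}}$ in case (a) and by the Orlicz--H\"older inequality with $\Phi(x)=\tilde\Phi(x^{2})$, $\tilde\Phi$ complementary to $\Psi$, in case (b), finishing by density of $L^\infty$ in $E_\Phi$. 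The only cosmetic difference is that the paper's identity is exact --- no boundary or lower-order terms arise, since one simply writes $A_{-1}=-(-A_{-1})^{\frac{1}{2}}(-A_{-1})^{\frac{1}{2}}$ and commutes one half-power through $T(\tfrac{s}{2})$ onto $x_0$ to produce $f(\tfrac{s}{2})$, rather than integrating by parts.
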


\begin{proof}
Recall that $A$ even generates a bounded analytic semigroup since $(-A_{-1})^{\frac{1}{2}}$ is $L^{2}$-admissible, \cite[Prop.~2.7]{BoDriEl10}.  We first show that $B$  is (finite-time) $L^{\infty}$-admissible. Therefore, we need to prove that $\Phi_{t}^{B}u\in X_{-1}$ lies in $X$ for all $u\in L^{\infty}(0,t)$ for $t>0$.
 From either of {\it (a)} or {\it (b)} it follows that (the restriction of) $f$ lies in {\color{black}$L^{2}(0,\frac{t}{2};X)$  and thus also $u(2\cdot)f\in L^{2}(0,\frac{t}{2};X)$.  By  $L^{2}$-admissibility of  $(-A_{-1})^{\frac{1}{2}}$ this implies that 
 \begin{equation*}\label{eq:factor}
\Phi_{\frac{t}{2}}^{(-A_{-1})^{\frac{1}{2}}}\left(u(2\cdot)f\right)\in X.
\end{equation*}
By definition of $\Phi_{\frac{t}{2}}^{(-A_{-1})^{\frac{1}{2}}}$ and $f$, also using that $A_{-1}T_{-1}(s)=T_{-1}(s)A_{-1}$ on $X$, 
\begin{align*}
\Phi_{t}^{B}{u}={}&\int_{0}^{t}T_{-1}(s)Bu(s)\, ds\\
={}&-\int_{0}^{t}T_{-1}(\tfrac{s}{2})(-A_{-1})^{\frac{1}{2}}u(
 s)(-A)^{\frac{1}{2}}T(\tfrac{s}{2})x_{0}\,ds\\
 ={}& -\frac{1}{2}\int_{0}^{\frac{t}{2}}T_{-1}(s)(-A_{-1})^{\frac{1}{2}}u(2s)f(s)\,ds\\
={}&-\frac{1}{2}\Phi_{\frac{t}{2}}^{(-A_{-1})^{\frac{1}{2}}}\left(u(2\cdot)f\right)\in X.\\
\end{align*}
We conclude  that $B=A_{-1}x_{0}$ is $L^{\infty}$-admissible and that
\begin{align*}\notag
	\|\Phi_{t}^{B}u\|_{X}\leq{}& \frac{1}{2}\|\Phi_{\frac{t}{2}}^{(-A_{-1})^{\frac{1}{2}}}\| \|u(2\cdot)f\|_{L^{2}(0,\frac{t}{2};X)}\\
	\leq{}&\frac{1}{2\sqrt{2}}\|(-A_{-1})^{\frac{1}{2}}\|_{adm} \|uf(\tfrac{\cdot}{2})\|_{L^{2}(0,t;X)}. \label{eq:ne2}
\end{align*}
In case of {\it (a)}, that is, $f\in L^{2}(0,\infty;X)$, we can estimate  the right-hand side by $c\|u\|_{L^{\infty}(0,t)} \|f\|_{L^{2}(0,\infty;X)}$ for some $c>0$ independent of $t$. This  shows that $B=A_{-1}x_{0}$ is infinite-time $L^{\infty}$-admissible.\\
Now assume that {\it (b)} holds, hence $g:s\mapsto \|f(\tfrac{s}{2})\|^{2}$  lies in $L_{\Psi}(0,\infty)$.  
Hence, by H\"older's inequality for Orlicz spaces see e.g.\ \cite{JNPS16,Kufner},
\begin{align*}
\|u(\cdot)f(\tfrac{\cdot}{2})\|_{L^{2}(0,t;X)} \leq{}&\|u^2\|_{L_{\tilde{\Phi}}(0,t)}^{\frac{1}{2}}\, \|g\|_{L_{\Psi}(0,t)}^{\frac{1}{2}}\\
\leq{}&\, \|u\|_{L_{\Phi}(0,t)}\, \; \|g\|_{L_{\Psi}(0,\infty)}^{\frac{1}{2}},
\end{align*}}
where $\tilde\Phi$  and $\Psi$ are complementary Young functions and ${\Phi}(x)=\tilde\Phi(x^2)$, which is a Young function since it is the composition of two Young functions. 
Therefore,
\begin{equation*}
\exists C>0\ \forall t>0,u\in L^{\infty}(0,t):\quad \left\|\Phi_{t}^{B}u\right\|_{X} \leq C\|u\|_{L_{{\Phi}}(0,t)}=C\|u\|_{E_{{\Phi}}(0,t)}.
\end{equation*}
Thus, $B$ is infinite-time $E_{\Phi}$-admissible since $L^{\infty}(0,t)$ is dense in $E_{\Phi}(0,t)$.
\end{proof}
By an important property of Orlicz spaces, condition {\it (a)} always implies {\it (b)} in Proposition \ref{theo:FCISS}. This enables us to prove the main result of this section.
\begin{theorem}\label{cor:sqfctest}
Let $A$ generate a bounded $C_{0}$-semigroup $T$  on a Banach space $X$. Suppose that $(-A)^{\frac{1}{2}}$ is an infinite-time $L^{2}$-admissible observation operator and  that $(-A_{-1})^{\frac{1}{2}}$ is an infinite-time $L^{2}$-admissible control operator. \\Then for any $B\in\mathcal{L}(U,X_{-1})$  with $\dim U<\infty$ and $\ran B\subset \ran A_{-1}$,
 it holds that
\begin{enumerate}[label=(\roman*),ref=(\roman*)]
\item $B$ is infinite-time $L^{\infty}$-admissible, and \label{cor:sqfctestit1}
\item there exists a Young function $\Phi$ such that $B$ is infinite-time $E_{\Phi}$-admissible.  \label{cor:sqfctestit2}
\end{enumerate}
\end{theorem}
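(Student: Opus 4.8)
The plan is to reduce to scalar inputs and to recognize that the two admissibility hypotheses are precisely the hypotheses of Proposition~\ref{theo:FCISS}. Since $(-A_{-1})^{\frac12}$ is $L^{2}$-admissible, $A$ generates a bounded analytic semigroup (as recalled in the proof of Proposition~\ref{theo:FCISS} via \cite[Prop.~2.7]{BoDriEl10}), so $(-A)^{\frac12}T(\cdot)x$ is a well-defined $X$-valued function on $(0,\infty)$ for every $x\in X$. By Definition~\ref{def:adm}, the hypothesis that $(-A)^{\frac12}$ is an infinite-time $L^{2}$-admissible observation operator says exactly that $(-A)^{\frac12}T(\cdot)x\in L^{2}(0,\infty;X)$, with $L^{2}$-norm at most $\|(-A)^{\frac12}\|_{adm}\|x\|$, for every $x\in X$.

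This is the key point: given any $f\in U$, since $\ran B\subset\ran A_{-1}$ we may write $Bf=A_{-1}x_{f}$ with $x_{f}\in X$, and then $(-A)^{\frac12}T(\cdot)x_{f}$ is exactly the function appearing in hypothesis~(a) of Proposition~\ref{theo:FCISS} (with $x_{0}=x_{f}$), which we have just seen to lie in $L^{2}(0,\infty;X)$. As $(-A_{-1})^{\frac12}$ is moreover an infinite-time $L^{2}$-admissible control operator, Proposition~\ref{theo:FCISS}~(a) applies and yields that $Bf=A_{-1}x_{f}$ is infinite-time $L^{\infty}$-admissible for every $f\in U$; by Proposition~\ref{prop:finitedim} this gives \ref{cor:sqfctestit1}.

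For \ref{cor:sqfctestit2}, fix a basis $e_{1},\dots,e_{n}$ of $U$ and pick $x_{k}\in X$ with $Be_{k}=A_{-1}x_{k}$. Since hypothesis~(a) implies hypothesis~(b) (the Orlicz-space property stated just before the theorem), Proposition~\ref{theo:FCISS} applied to each $x_{k}$ provides a Young function $\Phi_{k}$ and a constant $C_{k}>0$ with $\|\Phi_{t}^{Be_{k}}u\|_{X}\le C_{k}\|u\|_{E_{\Phi_{k}}(0,t)}$ for all $t>0$ and $u\in E_{\Phi_{k}}(0,t)$. Put $\Phi:=\max\{\Phi_{1},\dots,\Phi_{n}\}$, which is again a Young function. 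From $\Phi\ge\Phi_{k}$ pointwise one gets $\|v\|_{L_{\Phi_{k}}(0,t)}\le\|v\|_{L_{\Phi}(0,t)}$ for all $t>0$, and, $U$ being finite-dimensional, the coordinate maps $u\mapsto u_{k}$ obey $\|u_{k}\|_{E_{\Phi}(0,t)}\le c\,\|u\|_{E_{\Phi}(0,t;U)}$ with $c$ independent of $t$. Decomposing $u=\sum_{k}u_{k}e_{k}$ and using \eqref{eq:prop4},
\begin{equation*}
\|\Phi_{t}^{B}u\|_{X}=\Big\|\sum_{k=1}^{n}\Phi_{t}^{Be_{k}}u_{k}\Big\|_{X}\le\sum_{k=1}^{n}C_{k}\|u_{k}\|_{E_{\Phi}(0,t)}\le c\Big(\sum_{k=1}^{n}C_{k}\Big)\|u\|_{E_{\Phi}(0,t;U)}
\end{equation*}
for all $t>0$, so $B$ is infinite-time $E_{\Phi}$-admissible, which is \ref{cor:sqfctestit2}.

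The crux is that hypothesis~(a) of Proposition~\ref{theo:FCISS} is nothing but the assumed infinite-time $L^{2}$-admissibility of the observation operator $(-A)^{\frac12}$, after which both assertions follow at once. The only mildly technical step is producing a single Young function $\Phi$ valid for all of $B$ once $\dim U>1$; this is handled by passing to $\Phi=\max\{\Phi_{1},\dots,\Phi_{n}\}$ and using that pointwise domination of Young functions gives domination of the corresponding Luxemburg norms.
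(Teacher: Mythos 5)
Your proof is correct and follows essentially the same route as the paper: reduce to scalar inputs via Proposition~\ref{prop:finitedim}, identify the hypothesis that $(-A)^{\frac12}$ is an infinite-time $L^{2}$-admissible observation operator with condition \textit{(a)} of Proposition~\ref{theo:FCISS}, pass to condition \textit{(b)}, and invoke that proposition. Two small remarks. First, your appeal to ``\textit{(a)} implies \textit{(b)}'' leans on the remark preceding the theorem; the actual justification is Lemma~\ref{lem:app1}, which also needs $\|f(\cdot)\|^{2}$ to be bounded on $(\tau,\infty)$ for some $\tau>0$ --- this is supplied by analyticity of $T$ (itself a consequence of the $L^{2}$-admissibility of $(-A_{-1})^{\frac12}$), and the paper's proof spells this out. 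Second, your construction $\Phi=\max\{\Phi_{1},\dots,\Phi_{n}\}$ for assertion \ref{cor:sqfctestit2} is a genuine refinement: the paper's bare appeal to Proposition~\ref{prop:finitedim} tacitly assumes a single Young function works for all basis directions, whereas your observation that a finite maximum of Young functions is again a Young function and that pointwise domination of Young functions yields the reverse domination of the Luxemburg norms closes that small gap explicitly.
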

\begin{proof}
By Proposition \ref{prop:finitedim}, it suffices to consider $B\in \ran A_{-1}$. We choose $x\in X$ such that $B=A_{-1}x$. Since $(-A)^{\frac{1}{2}}$ is an infinite-time $L^{2}$-admissible observation operator, $f(s)=(-A)^{\frac{1}{2}}T(s)x \in L^{2}(0,\infty;X)$ and since $\|(-A)^{\frac{1}{2}}T(\cdot)x\|$ is bounded on $(\tau, \infty)$, for  all $\tau>0$, Lemma \ref{lem:app1} in the Appendix implies that $\|f(\cdot)\|_{X}^{2}\in L_{\Psi}(0,\infty)$ for some Young function $\Psi$.  Therefore, the assumptions of Proposition \ref{theo:FCISS}, particularly both {\it (a)} and {\it (b)}, are satisfied. Hence the assertions \ref{cor:sqfctestit1} and \ref{cor:sqfctestit2} follow.
\end{proof}
{\color{black}
It is well-known that the conditions on $T$ in Theorem \ref{cor:sqfctest} are naturally linked with an equivalent norm of the space $X$, \cite{LeMerdy2003}.  In fact, for $x\in X$ and $u(s)=(-A)^{\frac{1}{2}}T(s)x$, the infinite-time $L^{2}$-admissibility gives, for $t>0$,
$${\color{black}\|T(t)x-x\|_{X}}=\|\Phi_{t}^{(-A_{-1})^{\frac{1}{2}}}u\|_{X}\leq C_{1}\|\Psi_{t}^{(-A)^{\frac{1}{2}}}x\|_{L^{2}(0,t;X)}\leq C_{2}\|x\|,$$
where $C_{1}$ and $C_{2}$ do not depend on $t$.
If we additionally assume that $T$ is strongly stable, i.e. $\lim_{t\to\infty}T(t)x=0$ for all $x\in X$, then  
$x\mapsto \|\Psi_{t}^{(-A)^{\frac{1}{2}}}x\|_{L^{2}(0,\infty;X)}$ becomes an equivalent norm --- which is Hilbertian if $X$ is a Hilbert space --- with respect to which $T$ is contractive.  From a system theoretic view-point this equivalence of norms means nothing else than the property that the system
\begin{align*}
\dot{x}={}&Ax, \qquad x(0)=x_{0},\\
y={}&(-A)^{\frac{1}{2}}x
\end{align*}
is {\it exactly observable}.
We point out that since $T$ is a bounded analytic semigroup in this situation, strong stability of $T$ is equivalent to $A$ having dense range, see e.g.\ \cite[Cor.~III.3.17]{EisnerBook2010}. 
Moreover, it is known that infinite-time $L^{2}$-admissibility of $(-A)^{\frac{1}{2}}$ and $(-A_{-1})^{\frac{1}{2}}$ together with $A$ having dense range is equivalent to $A$ satisfying  square function estimates of the form \footnote{In the literature, the notion `$A$ satisfies square function estimates' typically  refers to the version of \eqref{eq:sqfct} where only the second inequality is satisfied.}  
\begin{equation}\label{eq:sqfct} 
 \forall \phi_{0}\in H_{0}^{\infty}(\Cm)\exists  k,K>0\, \forall x\in X \quad k\|x\|^{2}\leq\int_{0}^{\infty}\|\phi_{0}(tA)x\|^{2}\frac{dt}{t}\leq K\|x\|^{2},
\end{equation}
where  $H_{0}^{\infty}(\Cm)=\{f\in H^{\infty}(\Cm)\colon \exists c,s>0: \|f(z)|\leq  \frac{c|z|^{s}}{(1+|z|)^{2s}}\}$, see e.g.\ \cite{haakthesis,LeMerdy2003,mcintoshHinf}. Note that for $\phi_{0}(z)=(-z)^{\frac{1}{2}}e^{-z}$, the inequality in \eqref{eq:sqfct} is nothing else than the previously mentioned equivalence of the norms $\|\cdot\|$ and $ \|\Psi_{t}^{(-A)^{\frac{1}{2}}}\cdot\|_{L^{2}(0,\infty;X)}$. It can also be shown that $T$ is contractive with respect to any of the equivalent norms induced by $\phi_{0}$ via \eqref{eq:sqfct}. For Hilbert spaces $X$ there is a converse: Any generator, with dense range, of a bounded analytic semigroup that is similar to a contractive semigroup satisfies \eqref{eq:sqfct}. This is a consequence of McIntosh's theorem stating that square function estimates of the form \eqref{eq:sqfct} characterize a bounded $H^{\infty}$-calculus when $X$ is a Hilbert space \cite[Theorem 7.3.1]{HaaseBook} and results 
due to Le Merdy \cite{LeMerdy98}, and independently derived by Franks and Grabowski-Callier (see \cite{HaaseBook} and the references therein).
With this  and Proposition \ref{prop11}, Theorem \ref{cor:sqfctest} leads to the following corollary.
}
\begin{corollary}\label{cor1}
Let $A$ generate a bounded analytic semigroup $T$ on a Hilbert space and suppose that $A$ has dense range. If $A$ has a bounded $H^{\infty}$-calculus, or equivalently if $T$ is similar to a contraction semigroup, then for every $B\in\mathcal{L}(U,X_{-1})$ with $\dim U<\infty$ and $\ran B \subset \ran A_{-1}$, 
\begin{enumerate}[label=(\roman*)]
\item there exists a Young function $\Phi$  such that $B$ is infinite-time $E_{\Phi}$-admissible. 
\item the mild solutions of \eqref{eq1} are continuous for any $u\in L_{loc}^{\infty}(0,\infty;U)$.
\end{enumerate}
\end{corollary}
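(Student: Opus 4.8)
The plan is to deduce Corollary \ref{cor1} directly from Theorem \ref{cor:sqfctest} and Proposition \ref{prop11}, with the bulk of the work being the translation of the $H^\infty$-calculus hypotheses into the admissibility hypotheses required by Theorem \ref{cor:sqfctest}. First I would observe that the discussion preceding the corollary (the paragraph on square function estimates) already records the equivalence we need: for a generator $A$ of a bounded analytic semigroup on a Hilbert space $X$ with dense range, a bounded $H^\infty$-calculus is equivalent, by McIntosh's theorem together with the results of Le Merdy, Franks, and Grabowski--Callier, to the square function estimates \eqref{eq:sqfct}, and also equivalent to $T$ being similar to a contraction semigroup; moreover the special choice $\phi_0(z)=(-z)^{1/2}e^{-z}$ in \eqref{eq:sqfct} is precisely the statement that the norms $\|\cdot\|$ and $\|\Psi_t^{(-A)^{1/2}}\cdot\|_{L^2(0,\infty;X)}$ are equivalent, i.e.\ that $(-A)^{1/2}$ is an infinite-time $L^2$-admissible observation operator (the lower bound being exact observability, which we do not even need here).

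Next I would handle the dual/control side. Since $X$ is a Hilbert space (hence reflexive) and $T$ is a bounded analytic semigroup, $A^*$ also generates a bounded analytic semigroup, and a bounded $H^\infty$-calculus for $A$ transfers to $A^*$ (the calculus is self-dual on Hilbert space). Applying the observation-side statement to $A^*$ gives that $(-A^*)^{1/2}$ is an infinite-time $L^2$-admissible observation operator for $A^*$; by the standard duality between admissible observation operators for $A^*$ and admissible control operators for $A$ (using $(X_{-1}^A)' \simeq X_1^{A^*}$ as recalled in the Notions section), this is exactly the statement that $(-A_{-1})^{1/2}$ is an infinite-time $L^2$-admissible control operator for $A$. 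Here one should be a little careful that $A$ having dense range is also inherited by $A^*$ — but dense range of $A$ on a reflexive space is equivalent to injectivity of $A^*$, and for a sectorial operator with dense range on a Hilbert space one gets the analogous property for the adjoint; alternatively one invokes \cite[Cor.~III.3.17]{EisnerBook2010} again together with strong stability of $T^*$, which follows from strong stability of $T$ on a Hilbert space for bounded analytic semigroups. With both hypotheses of Theorem \ref{cor:sqfctest} verified, assertion (i) — existence of a Young function $\Phi$ with $B$ infinite-time $E_\Phi$-admissible for every $B\in\mathcal{L}(U,X_{-1})$ with $\dim U<\infty$ and $\ran B\subset\ran A_{-1}$ — is immediate.

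Finally, assertion (ii) follows from (i) by Proposition \ref{prop11}: $E_\Phi$-admissibility implies $L^\infty$-admissibility and continuity of the mild solution $x:[0,\infty)\to X$ of \eqref{eq1} for every $u\in L^\infty_{loc}(0,\infty;U)$ and every $x_0\in X$. The main obstacle I anticipate is not any of the individual steps but rather assembling the web of equivalences cleanly — in particular making sure the "bounded $H^\infty$-calculus $\Leftrightarrow$ similarity to a contraction $\Leftrightarrow$ square function estimates" chain is quoted with the correct hypotheses (dense range is essential, and on the Hilbert space $X$ Hilbertian equivalent norms are available) and that the passage from the observation-operator formulation for $A$ and $A^*$ to the control-operator formulation for $A$ is carried out via the correct duality of extrapolation spaces. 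Once that bookkeeping is in place, the corollary is a formal consequence of Theorem \ref{cor:sqfctest} and Proposition \ref{prop11}.
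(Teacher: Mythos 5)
Your proposal is correct and follows essentially the same route as the paper: the paper likewise deduces the corollary from Theorem \ref{cor:sqfctest} and Proposition \ref{prop11}, using McIntosh's theorem together with the Le Merdy/Franks/Grabowski--Callier results to identify a bounded $H^{\infty}$-calculus (equivalently, similarity to a contraction) with the square function estimates \eqref{eq:sqfct}, which in turn encode the infinite-time $L^{2}$-admissibility of $(-A)^{\frac{1}{2}}$ and $(-A_{-1})^{\frac{1}{2}}$. The only cosmetic difference is that you verify the control-side hypothesis by an explicit duality argument through $A^{*}$, whereas the paper simply cites the known equivalence of the two-sided estimates \eqref{eq:sqfct} with the pair of admissibility conditions.
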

By rescaling the results in this section can be adapted to analytic semigroups (instead of bounded analytic semigroups). Then, however, only finite-time admissibility is obtained in general.

\section{$L^{\infty}$-admissibility and bounded $H^\infty$-calculus}\label{Sec3}
The boundedness of the $H^{\infty}$-calculus, the crucial condition in Theorem  \ref{cor:sqfctest} and Corollary \ref{cor1}, may look artificially chosen in order to make the proofs, particularly of Proposition 6, work. The goal of this section is to demonstrate that this is not the case. In fact, we show that the converses of Theorem \ref{cor:sqfctest} and Corollary \ref{cor1} hold  for Hilbert spaces and explain what can be said in the case of more general Banach spaces. This reveals that the boundedness of the $H^{\infty}$-calculus appears naturally in this context. \\
It will be convenient to use the following notation for spaces of admissible operators.
\begin{align*}
\mathfrak{b}_{\infty}(A)&=\{B\in\mathcal{L}(\C,X_{-1})\colon B\text{ is infinite-time }L^{\infty}\text{-admissible for } A\}\\
\mathfrak{c}_{1}(A)&=\{C\in\mathcal{L}(X_{1},\C)\colon C\text{ is infinite-time }L^{1}\text{-admissible for } A\}.
\end{align*} 
Upon identification these spaces are contained in $X_{-1}$ and $(X_{1})'$, respectively. 
Note that under the conditions of Theorem \ref{cor:sqfctest}, it holds that $\mathfrak{b}_{\infty}(A)\supseteq\ran A_{-1}$. {\color{black}We will shortly see that this inclusion is in fact an equality.}

First we show that  analyticity of the semigroup is necessary under the condition that $\mathfrak{b}_{\infty}(A)=\ran A_{-1}$.

\begin{proposition}\label{prop:b(A)}
Let $A$ generate a semigroup $T$ on a  Banach space $X$. If $\frak{b}_{\infty}(A)=\ran A_{-1}$, then 
 $A$ generates a bounded analytic semigroup.
\end{proposition}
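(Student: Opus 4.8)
The plan is to extract, from the hypothesis, just enough information about $T$ to invoke the resolvent description of bounded analytic semigroups recalled in the Notions section; in fact only the inclusion $\mathfrak{b}_{\infty}(A)\supseteq\ran A_{-1}$ will be used, i.e.\ that for every $x\in X$ the operator $A_{-1}x$ (identified with $c\mapsto cA_{-1}x$) is infinite-time $L^{\infty}$-admissible, with constant $c_{x}:=\|A_{-1}x\|_{adm}$. First I would show that $T$ is bounded. Fix $x\in X$. Since $D(A_{-1})=X$, the orbit $s\mapsto T_{-1}(s)x$ is continuously differentiable in $X_{-1}$ with derivative $T_{-1}(s)A_{-1}x$, so that $\int_{0}^{t}T_{-1}(s)A_{-1}x\,ds=T(t)x-x$. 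Testing admissibility of $A_{-1}x$ against the input $u\equiv1$ on $(0,t)$, which satisfies $\|u\|_{L^{\infty}(0,t)}=1$, gives $\|T(t)x-x\|_{X}=\|\Phi_{t}^{A_{-1}x}u\|_{X}\le c_{x}$ for all $t>0$. Hence $\sup_{t>0}\|T(t)x\|_{X}<\infty$ for each $x$, and the uniform boundedness principle yields $\sup_{t>0}\|T(t)\|<\infty$.

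Next I would derive a half-plane resolvent bound. Fix $\mu$ with $\Re\mu>0$ and test admissibility of $A_{-1}x$ against $u(s)=e^{-\mu s}$, which again has $\|u\|_{L^{\infty}(0,t)}=1$. Integrating by parts in $X_{-1}$,
\begin{equation*}
\Phi_{t}^{A_{-1}x}u=\int_{0}^{t}e^{-\mu s}T_{-1}(s)A_{-1}x\,ds=e^{-\mu t}T(t)x-x+\mu\int_{0}^{t}e^{-\mu s}T(s)x\,ds,
\end{equation*}
and the right-hand side is genuinely $X$-valued, so its $X$-norm is $\le c_{x}$ for all $t>0$. Letting $t\to\infty$ and using boundedness of $T$ (so $e^{-\mu t}T(t)x\to0$ in $X$ and $\int_{0}^{t}e^{-\mu s}T(s)x\,ds\to R(\mu,A)x$ in $X$ by dominated convergence), one obtains $\|\mu R(\mu,A)x-x\|_{X}=\|AR(\mu,A)x\|_{X}\le c_{x}$. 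As this holds for all $\Re\mu>0$ and all $x\in X$, and $AR(\mu,A)=\mu R(\mu,A)-I\in\mathcal{L}(X)$, a further appeal to the uniform boundedness principle gives $C_{0}:=\sup_{\Re\mu>0}\|AR(\mu,A)\|<\infty$, and therefore $\sup_{\Re\mu>0}\|\mu R(\mu,A)\|\le 1+C_{0}<\infty$.

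Finally I would conclude: a bounded $C_{0}$-semigroup whose generator satisfies $\sup_{\Re\mu>0}\|\mu R(\mu,A)\|<\infty$ is bounded analytic. Indeed, a Neumann series expansion of the resolvent around the points $\varepsilon+i\tau$, letting $\varepsilon\downarrow0$, shows that $\rho(A)$ contains an open sector about the positive real axis of half-angle $\pi-\arctan(1+C_{0})>\tfrac{\pi}{2}$, on every proper subsector of which $\|\mu R(\mu,A)\|$ remains bounded; equivalently $-A$ is sectorial of angle $\arctan(1+C_{0})<\tfrac{\pi}{2}$, which is precisely the characterisation used above (see also \cite{EisnerBook2010}). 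The one point requiring care is the rewriting in the second step: the quantity $\Phi_{t}^{A_{-1}x}u$ is only known \emph{a priori} to lie in $X$ with the bound $c_{x}$ by virtue of admissibility, so it has to be re-expressed as the explicit $X$-valued integral displayed above before the limit $t\to\infty$ is passed, in order that the norm estimate be inherited in $X$ rather than merely in $X_{-1}$, where it would be useless.
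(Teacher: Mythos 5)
Your proof is correct and follows essentially the same strategy as the paper's: test infinite-time $L^{\infty}$-admissibility of $A_{-1}x$ against $u\equiv 1$ to get boundedness of $T$ via the uniform boundedness principle, then against $u(s)=e^{-\mu s}$ to get the resolvent estimate, and conclude analyticity from the standard resolvent characterisation. The only (harmless) deviations are that you obtain the uniform half-plane bound $\sup_{\Re\mu>0}\|AR(\mu,A)\|<\infty$ directly by a second application of the uniform boundedness principle and finish with the Neumann-series argument, whereas the paper first bounds $\|i\omega R(i\omega,A-\varepsilon)\|$ uniformly in $\varepsilon$ and then passes to $A$ by sectorial approximation; your explicit integration by parts to keep the $t\to\infty$ limit $X$-valued is a welcome extra care.
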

\begin{proof}
{\color{black}We first show that $T$ is bounded. For $x\in X$, $b=A_{-1}x$ is infinite-time $L^{\infty}$-admissible, hence with $u(s)=1$,
$$\sup_{t>0}\|T(t)x-x\|=\sup_{t>0}\left\|\int_{0}^{t}T_{-1}(s)A_{-1}xu(s)\,ds\right\|\leq \sup_{t>0} \|\Phi_{t}^{A_{-1}x}\|_{\mathcal{L}(L^{\infty}(0,t),X)}<\infty,$$
whence $T$ is bounded by the uniform boundedness principle.
}
With $u(s)=e^{-i\omega s-\varepsilon s}$, $\omega\in\mathbb{R}$, $\varepsilon>0$ we obtain for every $x\in X$
\begin{align*}
 \int_{0}^{\infty} T_{-1}(s)A_{-1}x u(s)\,ds &= \int_{0}^{\infty} e^{-(i\omega +\varepsilon) s}T_{-1}(s)A_{-1}x \,ds\\
 & = (i\omega+\varepsilon -A_{-1})^{-1}A_{-1}x \in X.
 \end{align*}
By the assumption that $\frak{b}_{\infty}(A)=\ran A_{-1}$, we get $\|(i\omega+\varepsilon-A_{-1})^{-1}A_{-1}x\| \leq C \|x\|_{X}$ for a constant $C$ independent of $x$, $\omega$ and $\varepsilon$. Thus,
\begin{align*}
\|i\omega(i\omega+\varepsilon-A)^{-1}x\|={}&\|(i\omega-(A_{-1}-\varepsilon))^{-1}(A_{-1}-\varepsilon)x+x\|\\
\leq{}& \|(i\omega-(A_{-1}-\varepsilon))^{-1}A_{-1}x\| + \varepsilon\|(i\omega-(A-\varepsilon))^{-1}x\|+\|x\|\\
\leq{}&(C+M+1)\|x\|,
\end{align*}
where $M\geq1$ is such that $\|(\lambda-A)^{-1}\|\leq \frac{M}{\Re \lambda}$ for all $\Re\lambda>0$.
Therefore, $A-\varepsilon$ generates a bounded analytic semigroup, see \cite{EngelNagel,Pazy83}. Moreover, since the sectorality constant $$\sup_{\Re z>0}\|z(z-(A-\varepsilon))^{-1}\|\leq C+M+1,$$
is bounded independently of $\varepsilon$, and thus
$$(1-A)^{-1}-(1-(A-\varepsilon))^{-1}=\varepsilon (1-(A-\varepsilon))^{-1}(1-A)^{-1}\to 0,$$
as $\varepsilon\to0$, the sequence of operators $(-A+\frac{1}{n})_{n\in\N}$ forms a sectorial approximation for $-A$  (on some sector $\Sigma_{\theta}$, $\theta\in(0,\frac{\pi}{2})$), see \cite[Sec.~2.1.2]{HaaseBook}. Thus $A$ also generates a bounded analytic semigroup.
\end{proof}
\begin{theorem}\label{thm2}
Let $A$ generate a semigroup on a Banach space $X$ and let $A$ have dense range.
Consider the following assertions. 
\begin{enumerate}[label=\textit{(\roman*)}, ref=\textit{(\roman*)}]
\item\label{thm2it1}  $\mathfrak{b}_{\infty}(A)=\ran A_{-1}$
\item\label{thm2it3} $A$ generates a bounded analytic semigroup $T$ and
 $-A$ has a bounded $H^{\infty}(\Sigma_{\theta})$-calculus for any $\theta\in(\frac{\pi}{2},\pi)$
\item\label{thm2it2}   $\mathfrak{c}_{1}(A)=\{ \langle y',A\cdot\rangle_{X',X}\colon y'\in X'\}$
\end{enumerate}
Then $\ref{thm2it1}\Rightarrow \ref{thm2it3}{ \color{black} \Leftrightarrow}\ref{thm2it2}$. 
If $X$ is reflexive, then also $\ref{thm2it3}\Rightarrow \ref{thm2it1}$.
\end{theorem}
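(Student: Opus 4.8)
The plan is to prove the three displayed implications separately, handling \ref{thm2it1} via control operators on $X$, handling \ref{thm2it2} as the observation-operator (dual) counterpart of \ref{thm2it1}, and handling the reflexive implication \ref{thm2it3}$\Rightarrow$\ref{thm2it1} by a weak-compactness argument.

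For \ref{thm2it1}$\Rightarrow$\ref{thm2it3}: Proposition~\ref{prop:b(A)} already gives that $A$ generates a bounded analytic semigroup, and its proof also yields --- via a closed graph argument, using that $\mathfrak{b}_{\infty}(A)$ is a Banach space for $\|\cdot\|_{adm}$ and that $x\mapsto A_{-1}x$ has closed graph into it --- that $A_{-1}\in\mathcal{L}(X,X_{-1})$ is itself infinite-time $L^{\infty}$-admissible when read as a control operator with input space $U=X$. From here I would invoke the functional-calculus characterisation of this property: for generators of bounded analytic semigroups, infinite-time $L^{\infty}$-admissibility of $A_{-1}$ (with input space $X$) is equivalent to $-A$ having a bounded $H^{\infty}(\Sigma_{\theta})$-calculus for $\theta\in(\tfrac{\pi}{2},\pi)$. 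This combines Theorem~\ref{thm:haakBounit} (from \cite{BoDriEl10,haakthesis}) with the link between the $L^{\infty}$-Weiss conjecture and the $H^{\infty}$-calculus in the spirit of \cite{LeMerdy2003} (see also Section~\ref{Sec4}). I expect this translation to be the main obstacle: turning the dynamic admissibility bound into a square-function/$H^{\infty}$ estimate is exactly where the analytic machinery is needed.

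For \ref{thm2it3}$\Leftrightarrow$\ref{thm2it2}: the key observation is that \ref{thm2it2} is the observation-operator dual of \ref{thm2it1}. Via the duality $(X_{1}^{A})'\simeq X_{-1}^{A^{*}}$ --- and, when $A^{*}$ fails to be a $C_{0}$-generator, the semigroup generated by the part of $A^{*}$ in $\overline{D(A^{*})}$ --- infinite-time $L^{1}$-admissibility of an observation operator $C\in\mathcal{L}(X_{1},\C)$ for $A$ corresponds to infinite-time $L^{\infty}$-admissibility of a control operator for the adjoint semigroup, and under this identification $\{\langle y',A\cdot\rangle:y'\in X'\}$ corresponds to the range of the extended adjoint of $A$; hence \ref{thm2it2} is precisely \ref{thm2it1} for the adjoint. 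Since bounded analyticity and the bounded $H^{\infty}(\Sigma_{\theta})$-calculus pass to this dual, I would run the argument of the previous paragraph (and the converse below) on the dual side to obtain \ref{thm2it2}$\Leftrightarrow$\ref{thm2it3}. The subtlety worth isolating is that the ``$\subseteq$'' half of the converse, carried out on the dual side, needs only weak-$*$ compactness of the unit ball of $X'$, which is available for every Banach space --- this is why no reflexivity hypothesis enters in \ref{thm2it3}$\Leftrightarrow$\ref{thm2it2}.

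Finally, for \ref{thm2it3}$\Rightarrow$\ref{thm2it1} with $X$ reflexive, I would prove the two inclusions. The inclusion $\ran A_{-1}\subseteq\mathfrak{b}_{\infty}(A)$ is the easy direction of the characterisation above (bounded $H^{\infty}$-calculus $\Rightarrow$ $A_{-1}$ is $L^{\infty}$-admissible with input $X$ $\Rightarrow$ every $A_{-1}x$ is $L^{\infty}$-admissible), i.e.\ the reflexive analogue of Corollary~\ref{cor1}. For the reverse inclusion, take $b\in\mathfrak{b}_{\infty}(A)$ and set $v(t)=\int_{0}^{t}T_{-1}(s)b\,ds$; admissibility with input $u\equiv1$ gives $v(t)\in X$ and $\sup_{t>0}\|v(t)\|_{X}\leq\|b\|_{adm}<\infty$. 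Since $A$ generates a bounded analytic semigroup with dense range, $T$ is strongly stable (\cite[Cor.~III.3.17]{EisnerBook2010}) and $T_{-1}(1)b\in X$, so along any $t_{n}\to\infty$ we get $T_{-1}(t_{n})b=T(t_{n}-1)T_{-1}(1)b\to0$ in $X_{-1}$; by reflexivity I may pass to a subsequence with $v(t_{n})\rightharpoonup v_{\infty}\in X$. As $A_{-1}v(t_{n})=T_{-1}(t_{n})b-b$ and $A_{-1}\colon X\to X_{-1}$ is bounded, hence weak-to-weak continuous, passing to the weak limit yields $A_{-1}v_{\infty}=-b$, so $b\in\ran A_{-1}$. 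This second inclusion uses reflexivity (for weak compactness in $X$) but not the $H^{\infty}$-calculus, mirroring the role of weak-$*$ compactness on the dual side.
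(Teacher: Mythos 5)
Your closing argument for the inclusion $\mathfrak{b}_{\infty}(A)\subseteq\ran A_{-1}$ (setting $v(t)=\int_{0}^{t}T_{-1}(s)b\,ds$, using strong stability and weak compactness in the reflexive space to extract $v_\infty$ with $A_{-1}v_\infty=-b$) is correct and is genuinely different from the paper, which obtains this inclusion by passing to $\mathfrak{c}_{1}(A^{*})$ and invoking Weiss's duality theorem \cite[Thm.~6.9(ii)]{Weiss89i}; your route is more elementary. The rest of the proposal, however, rests on a step that is false.

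The pivot of your argument for \ref{thm2it1}$\Rightarrow$\ref{thm2it3} (and for the inclusion $\ran A_{-1}\subseteq\mathfrak{b}_{\infty}(A)$ in the converse) is the claim that $\mathfrak{b}_{\infty}(A)=\ran A_{-1}$ upgrades, via a closed-graph argument, to infinite-time $L^{\infty}$-admissibility of $A_{-1}$ as a control operator with input space $U=X$, and that this in turn is equivalent to a bounded $H^{\infty}(\Sigma_{\theta})$-calculus for $\theta>\frac{\pi}{2}$. Both halves fail. The closed-graph argument only yields a bound $\sup_{t>0}\|\Phi_{t}^{A_{-1}x}\|_{\mathcal{L}(L^{\infty}(0,t;\C),X)}\leq C\|x\|$, i.e.\ uniformity over \emph{scalar} inputs in each fixed direction; it says nothing about vector-valued inputs $u\in L^{\infty}(0,t;X)$. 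Proposition \ref{prop32} of the paper is precisely a counterexample: a diagonal analytic contraction semigroup on a Hilbert space (hence with bounded $H^{\infty}$-calculus on every sector, and with $\mathfrak{b}_{\infty}(A)=\ran A_{-1}$ by Corollary \ref{cor1}) for which $A_{-1}$ is \emph{not} $L^{\infty}$-admissible with input space $X$. The paper stresses that $L^{\infty}$-admissibility of $A_{-1}$ is much stronger than a bounded $H^{\infty}$-calculus (it is even open whether it forces $A$ to be bounded), and Theorem \ref{thm:haakBounit} identifies it with the full $L^{\infty}$-Weiss conjecture for \emph{all} $U$, not with the calculus. Note also that the direct calculus argument of Corollary \ref{cor3}, based on $g(z)=\int_{0}^{t}h(zs)u(s)\frac{ds}{s}$ with $h(z)=ze^{-z}$, is unavailable here since $h$ is unbounded on sectors of angle exceeding $\frac{\pi}{2}$. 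The bridge you are missing is the \emph{weak square function estimate}: from $\mathfrak{b}_{\infty}(A)=\ran A_{-1}$ one shows (Lemma \ref{lem2}, via the uniform boundedness principle and the unimodular input $u(s)=\exp(-i\arg\langle y,AT(s)x\rangle)$) that $\int_{0}^{\infty}|\langle y,AT(s)x\rangle|\,ds\leq C\|x\|\|y\|$, and by Cowling--Doust--McIntosh--Yagi this is \emph{equivalent} to the bounded $H^{\infty}(\Sigma_{\theta})$-calculus for $\theta>\frac{\pi}{2}$; the same estimate, read against $y'\in X'$, also yields \ref{thm2it3}$\Leftrightarrow$\ref{thm2it2} directly (the paper constructs $y'$ with $\langle c,\cdot\rangle=\langle y',A\cdot\rangle$ by integrating $-\int_{0}^{\infty}\langle c,T(s)x\rangle\,ds$), without the dual-semigroup detour you sketch. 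As written, \ref{thm2it1}$\Rightarrow$\ref{thm2it3} and $\ran A_{-1}\subseteq\mathfrak{b}_{\infty}(A)$ are therefore unproven in your proposal.
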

Before we give a proof of Theorem \ref{thm2}, let us discuss the relation to the results in Section \ref{Sec2}.
{\color{black} Note that a bounded $H^{\infty}(\Sigma_{\theta})$-calculus for $\theta>\frac{\pi}{2}$ will in general not imply  a bounded $H^{\infty}(\Sigma_{\theta})$-calculus for any $\theta\leq\frac{\pi}{2}$ (as e.g.\ required in Corollary \ref{cor1}), see \cite{Kalton2003} for a counterexample on a reflexive space.}
However, for Hilbert spaces it is known that any generator of a bounded analytic semigroup has a bounded $H^{\infty}(\Sigma_{\theta})$-calculus for some $\theta<\frac{\pi}{2}$ if and only if the calculus is bounded on some sector, see \cite[Thm.~8, p.~225]{mcintoshHinf} for the original result by McIntosh or \cite[Thm.~7.3.1]{HaaseBook}. This gives the following characterization, Corollary \ref{cor3}. If $X$ is not a Hilbert space, we need an additional assumption:
From a fundamental result by Kalton and Weis \cite[Prop.~5.1]{KaltonWeisSums}, it follows that if $-A$ is $R$-sectorial of $R$-type $\omega_R<\frac{\pi}{2}$, i.e.\ $\sigma(-A)\subset {\overline\Sigma_{\omega_{R}}}$  and the set
$$\{\lambda R(\lambda,-A)\colon \lambda\in \overline{\Sigma_{\omega'}}^{c}\}\quad \text{is }R\text{-bounded}\quad \forall \omega'>\omega_{R},$$
then the boundedness of the $H^{\infty}$-calculus for some angle implies that the $H^{\infty}(\Sigma_{\theta})$-calculus is bounded for $\theta>\omega_{R}$.
Here, $R$-boundedness is a generalization of usual boundedness of sets of operators in $\mathcal{L}(X)$, see e.g.\ \cite{KaltonWeisSums} for definitions. Note that since $R$-boundedness coincides with boundedness in the operator-norm on Hilbert spaces, the notions of $R$-sectorial of $R$-type $\omega_{R}$ and sectorality of angle $\omega_{R}$ coincide then. Another example of an $R$-sectorial operator with $\omega_{R}<\frac{\pi}{2}$ is given by analytic contraction semigroups of positive operators on $L^p$ spaces with $p\in(1,\infty)$, see \cite[Cor.~5.2]{KaltonWeisSums}.
\begin{corollary}\label{cor3}
Let $A$ generate a bounded analytic semigroup on a Banach space $X$. 
Further assume that $-A$ is $R$-sectorial of angle $\omega_R\in(0,\frac{\pi}{2})$ --- which is particularly satisfied if $X$ is a Hilbert space --- and $A$ has dense range. Then the following assertions are equivalent.
\begin{enumerate}[label=(\roman*), ref=(\it\roman*)]
\item\label{cor21it1} $\mathfrak{b}_{\infty}(A)=\ran A_{-1}$.
\item\label{cor21it2} $-A$ has a bounded $H^{\infty}(\Sigma_{\theta})$-calculus for some $\theta<\frac{\pi}{2}$.
\item\label{cor21it3} $\mathfrak{c}_{1}(A)=\{ \langle y',A\cdot\rangle_{X',X}\colon y'\in X'\}$.
\end{enumerate}
\end{corollary}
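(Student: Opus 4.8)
The plan is to deduce Corollary \ref{cor3} from Theorem \ref{thm2} once the two $H^{\infty}$-calculus conditions are matched up: \ref{cor21it2} asks for a bounded $H^{\infty}(\Sigma_{\theta})$-calculus at a \emph{small} angle $\theta<\tfrac{\pi}{2}$, whereas \ref{thm2it3} asks for one at \emph{large} angles $\theta\in(\tfrac{\pi}{2},\pi)$. One direction is soft: a bounded $H^{\infty}(\Sigma_{\theta})$-calculus at $\theta<\tfrac{\pi}{2}$ restricts to one at every larger angle, because $f\mapsto f|_{\Sigma_{\theta}}$ is a norm-one map $H^{\infty}(\Sigma_{\theta'})\to H^{\infty}(\Sigma_{\theta})$ and the Riesz--Dunford calculus is consistent in the sector angle. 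For the converse I would invoke the Kalton--Weis result quoted before the statement: since $-A$ is $R$-sectorial of $R$-type $\omega_{R}<\tfrac{\pi}{2}$, boundedness of the $H^{\infty}$-calculus at \emph{some} angle --- which is exactly what \ref{thm2it3} provides (recall that $A$ generates a bounded analytic semigroup by hypothesis) --- forces boundedness at every angle $\theta>\omega_{R}$, hence at some $\theta<\tfrac{\pi}{2}$. When $X$ is a Hilbert space $R$-boundedness is boundedness, and this step is McIntosh's angle-reduction theorem. Thus \ref{cor21it2}$\Leftrightarrow$\ref{thm2it3}.

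Granting this, most of the corollary follows from Theorem \ref{thm2}. Since \ref{thm2it3}$\Leftrightarrow$\ref{thm2it2} unconditionally, we get \ref{cor21it2}$\Leftrightarrow$\ref{cor21it3}; since \ref{thm2it1}$\Rightarrow$\ref{thm2it3}, we get \ref{cor21it1}$\Rightarrow$\ref{cor21it2}; and if $X$ is reflexive --- in particular if $X$ is a Hilbert space --- Theorem \ref{thm2} also gives \ref{thm2it3}$\Rightarrow$\ref{thm2it1}, which closes the cycle. So the only point that is not immediate is \ref{cor21it2}$\Rightarrow$\ref{cor21it1} for a non-reflexive $X$.

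For that I would prove the two inclusions of $\mathfrak{b}_{\infty}(A)=\ran A_{-1}$ separately. The inclusion $\ran A_{-1}\subseteq\mathfrak{b}_{\infty}(A)$ is a direct application of Theorem \ref{cor:sqfctest}: under \ref{cor21it2} together with dense range of $A$ and the $R$-sectoriality, the square-function characterization recalled before Corollary \ref{cor1} yields that $(-A)^{\frac12}$ is an infinite-time $L^{2}$-admissible observation operator and $(-A_{-1})^{\frac12}$ an infinite-time $L^{2}$-admissible control operator, so Theorem \ref{cor:sqfctest} makes every $B=A_{-1}x\in\ran A_{-1}$ infinite-time $L^{\infty}$-admissible. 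For the reverse inclusion I would take $b\in\mathfrak{b}_{\infty}(A)$, feed the constant input $u\equiv1$ into the admissibility bound to get $\sup_{t>0}\bigl\|\int_{0}^{t}T_{-1}(s)b\,ds\bigr\|_{X}<\infty$, and then exploit that $T$ is strongly stable (equivalent to dense range of $A$ for bounded analytic semigroups) to show that $y_{t}:=\int_{0}^{t}T_{-1}(s)b\,ds$ converges in $X$ to some $y_{\infty}$ and that $T_{-1}(t)b\to0$ in $X_{-1}$; since $A_{-1}y_{t}=T_{-1}(t)b-b$ in $X_{-1}$ and $A_{-1}\colon X\to X_{-1}$ is bounded, letting $t\to\infty$ gives $A_{-1}y_{\infty}=-b$, i.e.\ $b=A_{-1}(-y_{\infty})\in\ran A_{-1}$.

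The hard part is this last inclusion over a non-reflexive $X$: one must upgrade the \emph{boundedness} of $(y_{t})_{t>0}$ to actual \emph{convergence} in the norm of $X$, and the weak-$*$ compactness argument available in the reflexive (in particular Hilbert) case is gone, so the convergence has to be squeezed out of strong stability together with the uniform admissibility estimate. A second, related subtlety is to make precise --- outside the Hilbert setting, where this is McIntosh's theorem --- why the bounded $H^{\infty}$-calculus of \ref{cor21it2} supplies the $L^{2}$-square-function hypotheses needed to invoke Theorem \ref{cor:sqfctest}; this is where the $R$-sectoriality has to be used once more.
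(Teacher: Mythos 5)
Your global strategy --- reduce everything to Theorem \ref{thm2} via the Kalton--Weis/McIntosh angle improvement, and isolate \ref{cor21it2}$\Rightarrow$\ref{cor21it1} on non-reflexive $X$ as the only implication not already covered --- is exactly the paper's. The gap is in how you prove $\ran A_{-1}\subseteq\mathfrak{b}_{\infty}(A)$ from \ref{cor21it2}. You route this through Theorem \ref{cor:sqfctest}, which needs $(-A)^{\frac12}$ and $(-A_{-1})^{\frac12}$ to be infinite-time $L^{2}$-admissible, i.e.\ the classical square function estimates \eqref{eq:sqfct}. On a general Banach space a bounded $H^{\infty}$-calculus does \emph{not} imply \eqref{eq:sqfct}: the paper itself stresses (Section \ref{Sec6}, citing \cite{cowlingdoustmcintoshyagi}) that classical square functions are sufficient but not necessary for a bounded calculus outside Hilbert spaces, and $R$-sectoriality only fixes the \emph{angle} of the calculus, not this deficiency. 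So your argument for this inclusion works only when $X$ is a Hilbert space, whereas the corollary is claimed for $R$-sectorial generators on Banach spaces. The paper avoids square functions entirely: for $u\in L^{\infty}(0,t)$ it sets $g(z)=\int_{0}^{t}h(zs)u(s)\tfrac{ds}{s}$ with $h(z)=ze^{-z}$, observes that $\|g\|_{H^{\infty}(\Sigma_{\theta})}\leq C_{\theta}\|u\|_{L^{\infty}(0,t)}$ uniformly in $t$ (here $\theta<\tfrac{\pi}{2}$ is essential so that $s\mapsto h(zs)/s$ is integrable on $(0,\infty)$ along the rays of $\Sigma_{\theta}$), and computes $g(-A)x=\int_{0}^{t}T(s)A_{-1}x\,u(s)\,ds$; boundedness of the $H^{\infty}(\Sigma_{\theta})$-calculus then yields the uniform admissibility bound directly. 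You should replace your square-function detour by this construction.

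Two further remarks. First, your concern about the reverse inclusion $\mathfrak{b}_{\infty}(A)\subseteq\ran A_{-1}$ --- upgrading boundedness of $y_{t}=\int_{0}^{t}T_{-1}(s)b\,ds$ to norm convergence without reflexivity --- is legitimate, but you leave it open; be aware that the paper's proof of \ref{cor21it2}$\Rightarrow$\ref{cor21it1} consists solely of the forward inclusion and does not supply this step either, so you cannot lean on the paper here and would have to close it yourself (or retreat to reflexive $X$, where Theorem \ref{thm2} already gives \ref{thm2it3}$\Rightarrow$\ref{thm2it1}). Second, your matching of the two calculus conditions --- restriction to larger sectors in one direction, the Kalton--Weis $R$-sectoriality result in the other --- is correct and is precisely what the paper means by ``as argued above the corollary''.
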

\begin{proof}
{\color{black} It remains to show that \ref{cor21it2} implies \ref{cor21it1} (note that this automatically follows from Theorem \ref{thm2} if $X$ is reflexive) --- the rest follows from Theorem \ref{thm2} as argued above the corollary.  For that we use a standard argument for the $H^{\infty}$-calculus. For $u\in L^{\infty}(0,t)$ let $g\in H^{\infty}(\Sigma_{\theta})$, $\theta\in(0,\frac{\pi}{2})$, be defined by $g(z)=\int_{0}^{t}h(zs)u(s)\frac{ds}{s}$,  where $h(z)=ze^{-z}$. Since the $H^{\infty}(\Sigma_{\theta})$-calculus is bounded by assumption, we have that $g(-A)\in \mathcal{L}(X)$ and in particular 
$$g(-A)x=\int_{0}^{t}h(-sA)x\, u(s)\,\frac{ds}{s}=\int_{0}^{t}T(s)A_{-1}x\,u(s)\,ds\in X$$
for all $x\in X$. Since moreover $\|g(-A)\|\leq C\|g\|_{H^{\infty}({\Sigma_{\theta}})}\leq C_{\theta}\|u\|_{L^{\infty}(0,t)}$ where $C,C_{\theta}$ are independent of $t$, we conclude that $A_{-1}x\in \mathfrak{b}_{\infty}(A)$.}
\end{proof}
For the proof of Theorem \ref{thm2}, we need the following lemma.
\begin{lemma}\label{lem2}
Let $A$ generate a semigroup $T$ on a Banach space $X$.  If either $\mathfrak{b}_{\infty}(A)=\ran{A_{-1}}$ {\color{black} or $\mathfrak{c}_{1}(A)=\{ \langle y',A\cdot\rangle_{X',X}\colon y'\in X'\}$}, then $T$ is bounded analytic and there exists $C>0$ such that
\begin{equation}\label{eq4}
	\int_{0}^{\infty}|\langle y,AT(s)x\rangle|\, ds \leq C\,\|x\|\,\|y\|,\quad x\in X, y\in X'.
\end{equation}
\end{lemma}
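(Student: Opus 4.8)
The plan is to handle the two hypotheses in parallel: each one asserts that an entire family of operators is infinite-time admissible, with an a priori \emph{finite} constant for each member, and \eqref{eq4} is precisely the statement that the constant can be chosen \emph{uniformly}. The mechanism in both cases is the uniform boundedness principle, applied to a family of elements of $X$ obtained by using $L^{1}$--$L^{\infty}$ duality to ``freeze'' the test functions appearing in the admissibility estimates.

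\emph{The control case $\mathfrak{b}_{\infty}(A)=\ran A_{-1}$.} By Proposition~\ref{prop:b(A)}, $A$ already generates a bounded analytic semigroup, so $T(s)x\in D(A)$ and $s\mapsto AT(s)x$ is continuous on $(0,\infty)$ for every $x\in X$. For $t>0$ and $u\in L^{\infty}(0,t)$ with $\|u\|_{\infty}\le1$ define $R_{t,u}\colon X\to X$, $R_{t,u}x=\int_{0}^{t}T_{-1}(s)A_{-1}x\,u(s)\,ds$; this map is everywhere defined and $X$-valued exactly because each $A_{-1}x$ is infinite-time $L^{\infty}$-admissible, and it is closed (if $x_{n}\to x$ and $R_{t,u}x_{n}\to z$ in $X$, dominated convergence in $X_{-1}$ identifies $z=R_{t,u}x$), hence bounded by the closed graph theorem. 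Since $\sup_{t,u}\|R_{t,u}x\|=\|A_{-1}x\|_{adm}<\infty$ for each fixed $x$, the uniform boundedness principle yields $C:=\sup_{t>0,\,\|u\|_{\infty}\le1}\|R_{t,u}\|_{\mathcal{L}(X)}<\infty$. Restricting to $u$ supported in $[\delta,t]$ turns $R_{t,u}x$ into an honest Bochner integral $\int_{\delta}^{t}AT(s)x\,u(s)\,ds$ in $X$, so $\langle y,R_{t,u}x\rangle=\int_{\delta}^{t}u(s)\langle y,AT(s)x\rangle\,ds$; taking the supremum over such $u$ and using $L^{1}$--$L^{\infty}$ duality gives $\int_{\delta}^{t}|\langle y,AT(s)x\rangle|\,ds\le C\|x\|\,\|y\|$, and $\delta\to0$, $t\to\infty$ (monotone convergence) gives \eqref{eq4}.

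\emph{The observation case $\mathfrak{c}_{1}(A)=\{\langle y',A\cdot\rangle\colon y'\in X'\}$.} Now the hypothesis says that each $\langle y',A\cdot\rangle$ is infinite-time $L^{1}$-admissible, so $\int_{0}^{t}|\langle y',T(s)Ax\rangle|\,ds\le K_{y'}\|x\|$ for $x\in X_{1}$, with $K_{y'}:=\|\langle y',A\cdot\rangle\|_{adm}<\infty$. Writing this integral as $\sup_{\|g\|_{\infty}\le1}\langle y',\xi_{g,t,x}\rangle$ with $\xi_{g,t,x}:=\int_{0}^{t}g(s)T(s)Ax\,ds\in X$ (an ordinary Bochner integral, no analyticity needed), the uniform boundedness principle applied to the family $\{\xi_{g,t,x}\colon t>0,\ \|g\|_{\infty}\le1,\ x\in X_{1},\ \|x\|\le1\}\subset X\subset X''$ produces $C<\infty$ with $\int_{0}^{\infty}|\langle y',T(s)Ax\rangle|\,ds\le C\|x\|\,\|y'\|$ for $x\in X_{1}$, $y'\in X'$. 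Testing against the kernel $e^{-\lambda s}$ (and using $AR(\lambda,A)x=R(\lambda,A)Ax$ on $X_{1}$) gives $\|AR(\lambda,A)x\|\le C\|x\|$ on the core $X_{1}$, hence $\sup_{\Re\lambda>0}\|AR(\lambda,A)\|_{\mathcal{L}(X)}\le C$; combined with boundedness of $T$ (the case $g\equiv1$), the resulting bound $\sup_{\Re\lambda>0}\|\lambda R(\lambda,A)\|\le1+C$ shows that $A$ generates a bounded analytic semigroup, see \cite{EngelNagel,Pazy83}. With analyticity available, $AT(s)x_{n}\to AT(s)x$ in $X$ whenever $x_{n}\to x$ and $s>0$, so Fatou's lemma upgrades the estimate from $X_{1}$ to all of $X$, which is \eqref{eq4}.

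\emph{Expected main difficulty.} Two points require care. The uniform boundedness principle has to be applied to a single family of objects that does not itself involve the test vector ($y\in X'$ in the first case, $y'$ in the second); this is what the $L^{1}$--$L^{\infty}$ ``freezing'' of the test function into a fixed element of $X$ is for. Second, in the observation case the admissibility estimate is initially available only on the core $X_{1}$, and one cannot pass to arbitrary $x\in X$ by approximation before $T$ is known to be analytic --- so the argument must first extract the half-plane resolvent bound on $X_{1}$, deduce analyticity from it, and only then complete the estimate. This small bootstrap, together with keeping track that all constants are independent of $t$ so that the infinite-time (rather than merely finite-time) conclusion is reached, is the only genuinely delicate part; it is avoided in the control case by Proposition~\ref{prop:b(A)}.
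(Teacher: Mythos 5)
Your argument is correct and follows the paper's own proof in all essentials: the closed graph theorem plus the uniform boundedness principle to extract a constant uniform in $t$ and in the test function, followed by $L^{1}$--$L^{\infty}$ duality (respectively, a unimodular choice of $u$) to obtain \eqref{eq4}. The only differences are cosmetic --- in the observation case you apply the uniform boundedness principle to the vectors $\xi_{g,t,x}$ in $X\subset X''$ rather than to the maps $y'\mapsto\langle y',AT(\cdot)x\rangle$ from $X'$ to $L^{1}(0,\infty)$, and you spell out the resolvent-bound derivation of analyticity that the paper compresses into ``a similar argument as in the proof of Proposition~\ref{prop:b(A)}''.
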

\begin{proof}
{\color{black} Assume first that $\mathfrak{c}_{1}(A)=\{ \langle y',A\cdot\rangle_{X',X}\colon y'\in X'\}$. Then, by definition of $\mathfrak{c}_{1}(A)$, the mapping $y'\mapsto \Psi_{\infty}^{\langle y',A\cdot\rangle}x=\langle y',AT(\cdot)x\rangle_{X',X}$ is linear and well-defined from $X'$ to $L^{1}(0,\infty)$ for any $x\in X$.  An application of the closed graph theorem, yields that this is even a bounded operator  for every $x$ and thus, by the uniform boundedness principle, there exists $C>0$ such that  \eqref{eq4} holds. That $T$ is bounded analytic follows by a similar argument as in the proof of Proposition \ref{prop:b(A)}.
 }
Now assume that $\mathfrak{b}_{\infty}(A)=\ran{A_{-1}}$, which implies that $T$ is bounded analytic by Proposition \ref{prop:b(A)} and furthermore 
\begin{equation*}
\Psi_{u,t}:X\mapsto X, x\mapsto \int_{0}^{t}AT(s)x\, u(s)\,ds,
\end{equation*}
is well-defined for any $u\in L^{\infty}(0,t)$ and $t>0$. Let $x_{n}\to x$ and $\Psi_{u,t}x_{n}\to y$ in $X$. Since $\Psi_{u,t}x_{n}\to \Psi_{u,t}x$ in $X_{-1}$  by the boundedness of the integrand (bounded in $X_{-1}$), we conclude that $\Psi_{u,t}x=y$.  
Thus, $\Psi_{u,t}$ is bounded by the closed graph theorem. We further claim that $\Psi_{u,t}$ is  uniformly bounded for $t>0$ and $u\in \mathcal{U}_{t}=\{u\in L^{\infty}(0,t)\colon \|u\|_{\infty}=1\}$. In fact, for fixed $x\in X$ it follows again by $\mathfrak{b}_{\infty}(A)=\ran A_{-1}$ that $\sup_{u\in\mathcal{U}_{t}, t>0}\|\Psi_{u,t}x\| <\infty$. Hence, by the uniform boundedness principle, 
$C:=\sup_{u\in\mathcal{U}_{t},t>0}\|\Psi_{u,t}\| <\infty$.\newline
Let $x\in X$ and $y\in X'$. Define $u(s)={\mathrm{ exp}}(-i\:\mathrm{arg}\langle y,AT(s)x\rangle)$, $s\in(0,t)$ which lies in $\mathcal{U}_{t}$. 
With this, we obtain
\begin{align*}
	\int_{0}^{t}|\langle y,AT(s)x\rangle|\, ds = {}& \int_{0}^{t}\langle y, AT(s)x\rangle\, u(s)\ ds\\
									 = {}& \langle y, \int_{0}^{t} AT(s)x \, u(s) ds\rangle\\             
									 \leq {}& \|y\|\, \|\Psi_{u,t}x\|\\
									 \leq {}& C\, \|y\|\, \|x\|.\qedhere
\end{align*}
\end{proof}

\begin{myproof}{Theorem}{\ref{thm2}}  By Lemma \ref{lem2}, {\color{black} both \ref{thm2it1} or \ref{thm2it2}  respectively imply that $A$ generates a bounded analytic semigroup and that 
 \begin{equation}\label{eq9}
\exists C>0\quad\forall x\in X,y'\in X'\quad \|\langle y,AT(\cdot)x\rangle\|_{L^{1}(0,\infty)} \leq C\, \|x\|\,\|y\|.
 \end{equation}
 }
The latter condition is known as \textit{weak square function estimate} and first appeared in the seminal paper by Cowling, Doust, McIntosh and Yagi \cite{cowlingdoustmcintoshyagi}. By \cite[Cor.~4.5 and Ex.~4.8]{cowlingdoustmcintoshyagi}, it is equivalent to $-A$ having a bounded $H^{\infty}({\Sigma_{\theta}})$-calculus for $\theta>\frac{\pi}{2}$. Thus, \ref{thm2it1}$\Rightarrow$\ref{thm2it3}  and \ref{thm2it2}$\Rightarrow$\ref{thm2it3}. \newline
{\color{black}
By definition of $\mathfrak{c}_{1}(A)$, \ref{thm2it3}  implies via \eqref{eq9} that $\mathfrak{c}_{1}(A)\supseteq \{\langle y',A\cdot\rangle\colon y'\in X'\}$. To see the other inclusion, let $c\in\mathfrak{c}_{1}(A)$ and define $y'\in X'$ by $\langle y',x\rangle_{X',X}=-\int_{0}^{\infty}\langle c,T(s)x\rangle\,ds$ for $x\in X_{1}$. Using that $\lim_{t\to\infty}T(t)x=0$, which follows by analyticity and as $A$ has dense range, one easily deduces that $\langle c,x\rangle = \langle y',Ax\rangle$ for $x\in D(A^{2})$ and thus all $x\in X_{1}$. This shows that\ref{thm2it3}$\Rightarrow$\ref{thm2it2}. \\
As discussed above, \ref{thm2it3} is equivalent to \eqref{eq9} which also implies that $\mathfrak{c}_{1}(A^{*})\supseteq\{\langle x,A^{*}\cdot\rangle_{X,X'}\colon x\in X\}$.
 If $X$ is reflexive, then the latter inclusion of sets is even an equality by a similar argument as for the implication \ref{thm2it3}$\Rightarrow$\ref{thm2it2}. By \cite[Thm.~6.9(ii)]{Weiss89i}, using again reflexivity, we conclude that $\mathfrak{b}_{\infty}(A)=\ran A_{-1}$ --- in fact, for $b\in X_{-1}$, we have that $b\in \mathfrak{b}_{\infty}(A)$ if and only if $b^{*}\in\mathfrak{c}_{1}(A^{*})$ by \cite[Thm.~6.9(ii)]{Weiss89i}. Hence  \ref{thm2it3}$\Rightarrow$\ref{thm2it1}.
}
\end{myproof}

\medskip

In the proof of Theorem \ref{thm2} weak square function estimates seem to be the right choice to characterize bounded $H^{\infty}$-calculus. However, these are somehow `exotic' compared to the classic square functions (in the context of functional calculus). We refer to \cite{hahaSQFCT} for a detailed discussion of their relations.

\begin{rem}
As we have seen, the condition $\mathfrak{b}_{\infty}(A)=X_{-1}$, i.e. $L^{\infty}$-admissibility for any possible scalar input, can be rewritten as
\begin{equation} \label{eq5}
\int_{0}^{t}AT(s)x u(s) \,ds\in X, 
\end{equation}
for all  $x\in X$, and $u\in L^{\infty}(0,t)$. As $\|AT(s)\|$ behaves like $s^{-1}$ for analytic semigroups,  \eqref{eq5} can be seen as a condition on the convergence of a singular integral. In other words, it is a `unconditionality' condition, which is a natural phenomenon for a bounded $H^{\infty}$-calculus,  see \cite[Theorem 5.6.2]{HaaseBook}.
\end{rem}

\section{The Weiss conjecture and some counterexamples}\label{Sec4}

 Corollary \ref{cor1} shows that for bounded analytic semigroups on Hilbert spaces with bounded $H^{\infty}$-calculus the set of $L^{\infty}$-admissible operators with finite-dimensional input space $U$ is as large as possible --- it equals $\mathcal{L}(U,X_{-1})$ if additionally $0\in\rho(A)$. 
 One may ask what does happen for infinite-dimensional spaces $U$. For that, let us draw the connection to the \textit{Weiss-conjecture} for control operators:
{\color{black} 
 As indicated in the introduction, G.~Weiss 
 formulated the problem whether infinite-time $L^{2}$-admissibility follows already from a necessary condition on the resolvent, which is derived considering $\Phi_{\infty}u$ with $u(s)=e^{-\lambda s}u$, $u\in U$, {$\Re\lambda>0$}, see Definition \ref{def:adm}. By using the Laplace transform and H\"older's inequality, this yields that infinite-time $L^{p}$-admissibility, $p\in[1,\infty)$, implies that 
 $$\sup_{\Re\lambda>0}\|(p\Re \lambda)^{\frac{1}{p}}R(\lambda,A_{-1})B\|_{\mathcal{L}(U,X)}<\infty.$$
 For $p=\infty$, this gives
 \begin{equation}\label{eq:rescond}
 \sup_{\Re\lambda>{0}}\|R(\lambda,A_{-1})B\|_{\mathcal{L}(U,X)}<\infty.
 \end{equation}
 }
In analogy to $\mathfrak{b}_{\infty}(A)$, let us introduce the sets
 $$\mathfrak{B}_{\infty,U}(A):=\{B\in\mathcal{L}(U,X_{-1})\colon B\text{ is infinite-time }L^{\infty}\text{-admissible}\}.$$
This means that the Weiss conjecture for $p=\infty$ asks whether 
 $\mathfrak{B}_{\infty,U}(A)$  equals $\{B\in\mathcal{L}(U,X_{-1})\colon \sup_{\Re\lambda>{0}}\|R(\lambda,A_{-1})B\|_{\mathcal{L}(U,X)}<\infty\}$. 
 {\color{black} Note that if the semigroup is bounded analytic, then \eqref{eq:rescond} is satisfied for any $B=b\in \ran A_{-1}$. Thus, by Corollary \ref{cor3} and Proposition \ref{prop:finitedim}, the Weiss conjecture for $p={\infty}$ and finite-dimensional $U$ has an affirmative answer for generators $A$ on Hilbert spaces if the following conditions are met
 \begin{itemize}
 \item  $A$ generates a bounded analytic semigroup and has dense range,
 \item  $A$ has a bounded $H^{\infty}$-calculus.
 \end{itemize}
 For a converse implication see Remark \ref{rem17} below.}
  However, for infinite-dimensional $U$ this cannot be expected in general as the following result shows.

\begin{proposition}\label{prop32}
Let $X$ be a Hilbert space with orthonormal basis $\{e_n\}$ and let $(\lambda_n)_{n\in \mathbb N}\subset \mathbb C_-$ such that  $(\Re \lambda_n)_{n\in \mathbb N}$ is a  monotonically decreasing sequence with $\lim_{n\rightarrow \infty}\Re \lambda_n=-\infty$ and $|\mbox{\rm Im} \,\lambda _n|\le k |\Re \lambda_n|$ for some $k>0$ and all $n\in \mathbb N$. Let $A:D(A)\subset X\rightarrow X$ be given by
\[ Ae_n:= \lambda_n e_n, \qquad D(A):=\{ x=\sum_{n} x_n e_n\mid \sum_n |\lambda_n x_n|^2 <\infty\}.\]
Further, we define $U:=X$ and $B\in L(U,X_{-1})$ by $B:=A_{-1}$. Then we have:
\begin{enumerate}[label=(\roman*)]
\item\label{prop3.1:it1} For every $u\in U$, the system $\Sigma(A,Bu)$ is $L^{\infty}$-admissible. 
\item\label{prop3.1:it2} $\Sigma(A,B)$ is not $L^{\infty}$-admissible.
\item\label{prop3.1:it3} $\Sigma(A,B)$ satisfies the resolvent condition \eqref{eq:rescond}. 
\end{enumerate}
\end{proposition}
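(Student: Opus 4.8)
The plan is to establish the three assertions separately, in the order \ref{prop3.1:it3}, \ref{prop3.1:it1}, \ref{prop3.1:it2}; only the last is non-routine. Throughout I would use the following structural facts. Since $\sigma(A)=\{\lambda_n\}$ is contained in the sector $\{z\in\C\colon \Re z<0,\ |\operatorname{Im}z|\le k|\Re z|\}$ and $A$ is normal with respect to the orthonormal basis $\{e_n\}$, the operator $A$ generates a bounded analytic semigroup $T$, given by $T(s)e_n=e^{\lambda_n s}e_n$, and has a bounded $H^\infty$-calculus; moreover $A$ has dense range because $e_n=A(\lambda_n^{-1}e_n)\in\ran A$ for every $n$, and one has $|\lambda_n|\le\sqrt{1+k^2}\,|\Re\lambda_n|$. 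For \ref{prop3.1:it3}, note that for $\Re\lambda>0$ the spectrum of $A_{-1}$ avoids $\lambda$, and $R(\lambda,A_{-1})B=R(\lambda,A_{-1})A_{-1}$ acts on $X$ diagonally with entries $\lambda_n(\lambda-\lambda_n)^{-1}$. From $|\lambda-\lambda_n|\ge\Re\lambda+|\Re\lambda_n|\ge|\Re\lambda_n|\ge|\lambda_n|/\sqrt{1+k^2}$ one obtains $\|R(\lambda,A_{-1})B\|_{\mathcal{L}(U,X)}=\sup_n|\lambda_n(\lambda-\lambda_n)^{-1}|\le\sqrt{1+k^2}$, uniformly in $\lambda$, which is \eqref{eq:rescond}.

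For \ref{prop3.1:it1}, fix $u\in U$ and consider the one-dimensional input operator $\C\to X_{-1}$, $c\mapsto cBu=A_{-1}(cu)$, whose range is contained in $\ran A_{-1}$. Since $T$ is a bounded analytic semigroup on a Hilbert space, $A$ has dense range and a bounded $H^\infty$-calculus, Corollary \ref{cor1} applies and gives that $\Sigma(A,Bu)$ is even infinite-time $E_\Phi$-admissible for some Young function $\Phi$, hence in particular $L^\infty$-admissible.

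The core is \ref{prop3.1:it2}. Fix $t>0$; I want to produce $u\in L^\infty(0,t;U)$ with $\Phi_t^B u\notin X$. Put $r_n=|\Re\lambda_n|$, so $r_n\to\infty$. The key first step is to thin out the sequence: replacing $(\lambda_n)$ by a subsequence and $\{e_n\}$ by the corresponding subfamily (it suffices to find a bad input supported, in the $e_n$-expansion, on such a subfamily), we may assume $r_1\ge 4/t$ and $r_{n+1}\ge 2r_n$, so that $r_n\ge 2^{n+1}/t$ and $\sum_n r_n^{-1}\le t/2$. Set $\delta_n=r_n^{-1}$, and let $(I_n)_n$ with $I_n=(a_n,b_n)$ be the pairwise disjoint subintervals of $(0,t)$ accumulating at $0$ with lengths $|I_n|=\delta_n$, i.e. $a_n=\sum_{j>n}\delta_j$ and $b_n=\sum_{j\ge n}\delta_j$; then $\bigcup_n I_n\subset(0,t)$ since $\sum_n\delta_n\le t/2$. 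Define
\begin{equation*}
u(s)=\sum_n\mathbf{1}_{I_n}(s)\,e_n,
\end{equation*}
which is Bochner measurable with $\|u(\cdot)\|_U\le1$ a.e., hence $u\in L^\infty(0,t;U)$. Using $T_{-1}(s)A_{-1}e_n=\lambda_n e^{\lambda_n s}e_n\in X$ and the disjointness of the $I_n$, a direct computation gives
\begin{equation*}
\Phi_t^B u=\int_0^t T_{-1}(s)A_{-1}u(s)\,ds=\sum_n c_n e_n,\qquad c_n:=e^{\lambda_n b_n}-e^{\lambda_n a_n},
\end{equation*}
an element of $X_{-1}$ that belongs to $X$ if and only if $(c_n)_n\in\ell^2$.

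It then remains to show $(c_n)_n\notin\ell^2$, for which it suffices that $\inf_n|c_n|>0$. The doubling condition yields $r_n a_n=r_n\sum_{j>n}r_j^{-1}\le\sum_{m\ge1}2^{-m}=1$, while $r_n\delta_n=1$; hence
\begin{equation*}
|c_n|=e^{-r_n a_n}\,|1-e^{\lambda_n\delta_n}|\ge e^{-1}\bigl(1-e^{-r_n\delta_n}\bigr)=e^{-1}(1-e^{-1})>0
\end{equation*}
for every $n$, so $\Phi_t^B u\notin X$ and $\Sigma(A,B)$ is not $L^\infty$-admissible. The only genuinely delicate point is this construction: to keep the $n$-th coefficient bounded away from $0$ one needs simultaneously $r_n\delta_n\gtrsim1$ (so the ``jump'' $1-e^{\lambda_n\delta_n}$ is not small) and $r_n a_n$ bounded (so the leading factor $e^{\lambda_n a_n}$ is not small), while the intervals must all fit into $(0,t)$, i.e.\ $\sum_n\delta_n<t$. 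These requirements are incompatible when $(r_n)$ grows slowly, which is precisely why one first passes to a geometrically growing subsequence, after which the bound $r_n a_n\le1$ becomes automatic. Everything else --- the diagonal estimates in \ref{prop3.1:it3} and \ref{prop3.1:it1}, the measurability of $u$, and the identification of $\Phi_t^B u$ with its Fourier series --- is routine.
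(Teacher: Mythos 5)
Your proposal is correct and follows essentially the same route as the paper: parts (i) and (iii) are handled exactly as there (Corollary~\ref{cor1} for the rank-one inputs, and the uniform diagonal resolvent bound for \eqref{eq:rescond}), and for (ii) the paper likewise passes to a subsequence with $|\Re\gamma_{m+1}|>2|\Re\gamma_m|$ and supports the $m$-th component of $u$ on the interval $[\tfrac{1}{2|\Re\gamma_m|},\tfrac{1}{|\Re\gamma_m|}]$, so that each output coefficient has modulus at least $e^{-1/2}-e^{-1}$ and the resulting series fails to be square-summable. Your contiguous packing of the intervals and the bound $r_na_n\le 1$ is only a cosmetic variant of the same doubling trick.
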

\begin{proof}
The operator $A$ generates an exponentially stable analytic diagonal semigroup on $X$. Thus \ref{prop3.1:it1} follows from Corollary \ref{cor1} and \ref{prop3.1:it3} is clear by the resolvent identity and since $-A$ is sectorial. Thus it remains to prove \ref{prop3.1:it2}. We choose a subsequence $(\gamma_n)_{n\in\mathbb N}$ of $(\lambda_n)_{n\in\mathbb N}$ such that
\[ \Re  \gamma_1\le -1 \quad \mbox{ and } \quad \Re \gamma_{n+1}< 2 \Re \gamma_n, \qquad n\in \mathbb N.\]
Let $u\in L^\infty(0,1;X)$ be given by $u(s)=\sum (u(s))_n e_n$ with
\[ (u(s))_n :=\begin{cases} 0, & \lambda_n\not=\gamma_m \mbox{ for all } m\in \mathbb N,\\
1, &  \lambda_n=\gamma_m  \mbox{ for some } m\in \mathbb N \mbox{ and } s\in \left[-\frac{1}{2\Re \gamma_m},-\frac{1}{\Re \gamma_m}\right].
\end{cases}\]
It is easy to see that $u\in L^\infty(0,1;X)$ with $\|u\|=1$. However,
\begin{align*}
\left\|\int_0^1 T_{-1}(s)Bu(s)\,ds\right\|^{2} & = \sum_{n=1}^\infty \left|\int_0^1 e^{\lambda_n s} \lambda_n (u(s))_n\, ds \right|^2\\
&= \sum_{m=1}^\infty \left( e^{-1/2}-e^{-1}\right)^2 \\
&=\infty,
\end{align*}
which shows that  $\Sigma(A,B)$ is not $L^\infty$-admissible. Note, however, that $A$ has a bounded $H^{\infty}$-calculus as the generated semigroup is a contraction semigroup.
\end{proof}
Proposition \ref{prop32} also shows that Proposition \ref{prop:finitedim} fails for infinite-dimensional spaces $U$ and $Z=L^{\infty}$, and hence shows that the so-called \textit{weak Weiss conjecture} is not true in this situation. Moreover, the example is in line with the following known characterization of when the $L^{\infty}$-Weiss conjecture actually holds. For reflexive spaces $X$ this follows already from the dual situation of $L^{1}$-admissible operators in \cite[Sektion 2.3]{haakthesis} and  Bounit--Driouich--El-Mennaoui {\cite{BoDriEl10} by using \cite[Thm.~6.9]{Weiss89i}. For completeness we provide a proof of the general case.
{\color{black}
\begin{theorem}\label{thm:haakBounit}
Let $A$ generate a  semigroup $T$ on a Banach space $X$. Then the following assertions are equivalent.
\begin{enumerate}[label=(\it\roman*),ref=(\it\roman*)]
\item\label{it:WC2} $A_{-1}$ is infinite-time $L^{\infty}$-admissible,
\item\label{it:WC3} $T$ is bounded analytic and the $L^{\infty}$-Weiss conjecture for $A$ holds, i.e.
$$\mathfrak{B}_{\infty,U}(A)=\{B\in\mathcal{L}(U,X_{-1})\colon\sup_{\lambda>{0}}\|R(\lambda,A_{-1})B\|_{\mathcal{L}(U,X)}<\infty\}$$
 for all Banach spaces $U$.
\end{enumerate}
If additionally $0\in\rho(A)$, then \ref{it:WC2} and \ref{it:WC3} are further equivalent to 
\begin{enumerate}[resume,label=(\it\roman*),ref=(\it\roman*)]
\item\label{it:WC1} $T$ is bounded analytic and
$\mathfrak{B}_{\infty,U}(A)=\mathcal{L}(U,X_{-1})$ for all Banach spaces $U$.
\end{enumerate}
\end{theorem}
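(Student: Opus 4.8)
The plan is to dispose of the two easy implications quickly and then concentrate on $\ref{it:WC2}\Rightarrow\ref{it:WC3}$, which carries the content, before handling the role of the extra hypothesis $0\in\rho(A)$. First note that the inclusion ``$\subseteq$'' in \ref{it:WC3} is automatic: necessity of the resolvent condition \eqref{eq:rescond} for infinite-time $L^{\infty}$-admissibility was recorded above (insert exponential inputs $u(s)=e^{-\lambda s}\xi$, $\xi\in U$, into $\Phi_{\infty}^{B}$), so $\mathfrak{B}_{\infty,U}(A)\subseteq\{B\colon\sup_{\lambda>0}\|R(\lambda,A_{-1})B\|_{\mathcal{L}(U,X)}<\infty\}$ once $T$ is bounded, which is part of both \ref{it:WC2} and \ref{it:WC3}. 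For $\ref{it:WC3}\Rightarrow\ref{it:WC2}$ I would apply the $L^{\infty}$-Weiss conjecture to the single operator $B=A_{-1}\in\mathcal{L}(X,X_{-1})$: writing $M:=\sup_{t\ge0}\|T(t)\|$, one has $\|\lambda R(\lambda,A)\|_{\mathcal{L}(X)}\le M$ for $\lambda>0$, and the identity $R(\lambda,A_{-1})A_{-1}x=-x+\lambda R(\lambda,A)x$ ($x\in X$) shows $\sup_{\lambda>0}\|R(\lambda,A_{-1})A_{-1}\|_{\mathcal{L}(X)}\le1+M$; hence $A_{-1}$ satisfies \eqref{eq:rescond} and by \ref{it:WC3} it is infinite-time $L^{\infty}$-admissible.

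For $\ref{it:WC2}\Rightarrow\ref{it:WC3}$ I would first recover that $T$ is bounded analytic essentially as in the proof of Proposition \ref{prop:b(A)}: taking $u\equiv1$ gives $\sup_{t>0}\|T(t)x-x\|\le\|A_{-1}\|_{adm}\|x\|$, so $T$ is bounded, and taking $u(s)=e^{-(i\omega+\varepsilon)s}$ ($\omega\in\R$, $\varepsilon>0$), integrating by parts in $X_{-1}$ and letting $t\to\infty$ in $X$, gives $\|-x+(i\omega+\varepsilon)R(i\omega+\varepsilon,A)x\|_{X}\le\|A_{-1}\|_{adm}\|x\|$; thus $\sup_{\omega\in\R,\varepsilon>0}\|i\omega R(i\omega+\varepsilon,A)\|_{\mathcal{L}(X)}\le1+M+\|A_{-1}\|_{adm}$, and the sectorial-approximation argument of Proposition \ref{prop:b(A)} yields that $A$ generates a bounded analytic semigroup. (Here the uniform bound that Proposition \ref{prop:b(A)} obtains from the uniform boundedness principle is handed to us directly by $\|A_{-1}\|_{adm}<\infty$.)

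The remaining inclusion ``$\supseteq$'' in \ref{it:WC3} is the heart of the proof. Let $B\in\mathcal{L}(U,X_{-1})$ with $K:=\sup_{\lambda>0}\|R(\lambda,A_{-1})B\|_{\mathcal{L}(U,X)}<\infty$, fix $t>0$ and $u\in L^{\infty}(0,t;U)$, and set $\beta:=1/t$, which lies in $\rho(A)$ because $T$ is bounded. Writing $B=(\beta-A_{-1})B_{\beta}$ with $B_{\beta}:=R(\beta,A_{-1})B\in\mathcal{L}(U,X)$, $\|B_{\beta}\|\le K$, one obtains in $X_{-1}$
\[
\int_{0}^{t}T_{-1}(s)Bu(s)\,ds=\beta\int_{0}^{t}T(s)B_{\beta}u(s)\,ds-\Phi_{t}^{A_{-1}}\bigl(B_{\beta}u(\cdot)\bigr).
\]
The first term is a genuine $X$-valued Bochner integral of norm at most $tM\|B_{\beta}\|\,\|u\|_{\infty}$, so after multiplication by $\beta=1/t$ it is bounded by $MK\|u\|_{\infty}$ independently of $t$; the second term lies in $X$ by \ref{it:WC2} and has norm at most $\|A_{-1}\|_{adm}K\|u\|_{\infty}$, since $B_{\beta}u(\cdot)\in L^{\infty}(0,t;X)$ has norm at most $K\|u\|_{\infty}$. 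Hence $\Phi_{t}^{B}u\in X$ with $\|\Phi_{t}^{B}u\|\le(M+\|A_{-1}\|_{adm})K\,\|u\|_{L^{\infty}(0,t;U)}$ uniformly in $t$, i.e.\ $B$ is infinite-time $L^{\infty}$-admissible; together with ``$\subseteq$'' this establishes \ref{it:WC3}.

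Finally, when $0\in\rho(A)$ the operator $A_{-1}\colon X\to X_{-1}$ is invertible, so for every $B\in\mathcal{L}(U,X_{-1})$ one has $R(\lambda,A_{-1})B=\bigl(R(\lambda,A_{-1})A_{-1}\bigr)\bigl(A_{-1}^{-1}B\bigr)$ with $A_{-1}^{-1}B\in\mathcal{L}(U,X)$ and $\|R(\lambda,A_{-1})A_{-1}\|_{\mathcal{L}(X)}\le1+M$ for $\lambda>0$; thus every $B$ satisfies \eqref{eq:rescond}, the right-hand set in \ref{it:WC3} equals $\mathcal{L}(U,X_{-1})$, and \ref{it:WC3} becomes exactly \ref{it:WC1}. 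The step I expect to be the main obstacle — not deep, but the one idea the argument needs — is the inclusion ``$\supseteq$'': the point is to let the spectral shift $\beta$ depend on the time horizon ($\beta=1/t$), so that the a priori linearly growing term $\beta\int_{0}^{t}T(s)B_{\beta}u\,ds$ stays bounded uniformly while the genuinely singular part is absorbed wholesale into the assumed admissibility of $A_{-1}$; one must also be careful throughout to distinguish $X_{-1}$-valued from $X$-valued Bochner integrals. (For reflexive $X$ one could alternatively dualize to the $L^{1}$-admissibility of $B^{*}$ as an observation operator for $A^{*}$ and combine \cite[Thm.~6.9]{Weiss89i} with \cite{BoDriEl10,haakthesis}, but the direct argument above needs no reflexivity.)
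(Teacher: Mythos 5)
Your proposal is correct and follows essentially the same route as the paper: the same necessity of \eqref{eq:rescond} via exponential inputs, the same reduction of \ref{it:WC3}$\Rightarrow$\ref{it:WC2} to the uniform bound on $R(\lambda,A_{-1})A_{-1}$, the same recovery of bounded analyticity as in Proposition \ref{prop:b(A)}, and crucially the identical decomposition $B=(\lambda-A_{-1})B_{\lambda}$ with the time-dependent choice $\lambda=1/t$ to absorb the singular part into the admissibility of $A_{-1}$. The only (harmless) deviations are cosmetic: you inline the uniform constant $\|A_{-1}\|_{adm}$ where the paper invokes Proposition \ref{prop:b(A)} wholesale, and you spell out the factorization $R(\lambda,A_{-1})B=\bigl(R(\lambda,A_{-1})A_{-1}\bigr)\bigl(A_{-1}^{-1}B\bigr)$ behind the equivalence with \ref{it:WC1}.
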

\begin{proof}
By the resolvent identity, it follows easily that $\|R(\lambda,A_{-1})A_{-1}\|$ is uniformly bounded for $\Re\lambda>0$ if $T$ is bounded analytic. Hence, \ref{it:WC3} implies \ref{it:WC2} and if $0\in\rho(A)$, then \ref{it:WC3} is equivalent to \ref{it:WC1}. Hence it remains to show \ref{it:WC2}$\implies$\ref{it:WC3}. 
We assume that $A_{-1}$ is infinite-time $L^{\infty}$-admissible.  Then $\mathfrak{b}_{\infty}(A)=\ran A_{-1}$ and thus $T$ is bounded analytic by Proposition \ref{prop:b(A)}.  Let $B\in\mathcal{L}(U,X_{-1})$ for some Banach space $U$ with $C:=\sup_{\lambda>0}\|R(\lambda,A_{-1})B\|_{\mathcal{L}(U,X)}<\infty$. For $\lambda>0$ we can write $B=(\lambda-A_{-1})B_{\lambda}$ for $B_{\lambda}=(\lambda-A_{-1})^{-1}B\in\mathcal{L}(U,X)$. Therefore, for $u\in L^{\infty}(0,t;U)$,
$$\int_{0}^{t}T_{-1}(s)Bu(s)\,ds=\lambda\int_{0}^{t}T(s)B_{\lambda}u(s)\,ds-\int_{0}^{t}T_{-1}(s)A_{-1}B_{\lambda}u(s)\,ds\in X$$
by the assumption that $A_{-1}$ is $L^{\infty}$-admissible and since $B_{\lambda}u(\cdot)\in L^{\infty}(0,t;X)$. Choosing $\lambda=\frac{1}{t}$, 
\begin{align*}\left\|\int_{0}^{t}T_{-1}(s)Bu(s)\,ds\right\|\leq{}& \left(\sup_{s>0}\|T(s)\|+\|\Phi_{t}^{A_{-1}}\|\right)\|B_{t^{-1}}\| \,\|u\|_{L^{\infty}(0,t;U)}\\
\leq{}& \left(\sup_{s>0}\|T(s)\|+\|\Phi_{t}^{A^{-1}}\|\right)C\,\|u\|_{L^{\infty}(0,t;U)}
\end{align*}
where we used that $\|B_{t^{-1}}\|=\|R(t^{-1},A_{-1})B\|\leq C$. Since $A_{-1}$ is infinite-time admissible, $\sup_{t>0}\|\Phi_{t}^{A_{-1}}\|_{\mathcal{L}(L^{\infty}(0,t;U),X)}<\infty$ and we conclude that $B$ is infinite-time $L^{\infty}$-admissible. 
\end{proof}
}

We emphasize that $A_{-1}$ being infinite-time $L^{\infty}$-admissible is a very strong and restrictive condition; much stronger than $A$ having a bounded $H^{\infty}$-calculus as Proposition \ref{prop32} shows. Moreover, it is an open question if the condition already implies that $A$ is a bounded operator. 
{\color{black}
Under an even (slightly) stronger condition, this can indeed be proved, as the following result shows.
} For that recall the following refinement of admissibility:
a $Z$-admissible operator $B$ is called zero-class $Z$-admissible if
\[\sup_{\|u\|_{Z(0,t;U)}\leq 1}\left\|\int_{0}^{t} T_{-1}(s)Bu(s)\,ds\right\| \rightarrow 0,\quad \text{as }t\searrow0.
\]

{\color{black}
\begin{proposition}\label{prop33}
Let $A$ generate a $C_{0}$-semigroup $T$ and let $Z$ be ${\rm C}$, $L^{\infty}$, $L_{1}$ or $E_{\Phi}$. Then $A_{-1}$ is a zero-class $Z$-admissible operator if and only if $A$ is bounded.
\end{proposition}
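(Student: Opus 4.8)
The plan is to prove both implications. The direction ``$A$ bounded $\Rightarrow$ $A_{-1}$ zero-class $Z$-admissible'' is the easy one: if $A\in\mathcal{L}(X)$ then $X_{-1}=X$, $A_{-1}=A$, and $T$ is a uniformly continuous semigroup, so $\|\int_0^t T(s)A u(s)\,ds\|\le t\,\sup_{s\le t}\|T(s)\|\,\|A\|\,\|u\|_{L^\infty(0,t)}$, and since all our $Z(0,t;U)$ embed into $L^1(0,t;U)$ with embedding constant tending to $0$ as $t\searrow 0$ (for $L^\infty$ and ${\rm C}$ the constant is $t$; for $E_\Phi$ one uses that $\|\mathbbm{1}_{(0,t)}\|_{L_{\tilde\Phi}(0,t)}\to0$ together with Hölder's inequality for Orlicz spaces), the supremum over the unit ball of $Z(0,t;U)$ of $\|\Phi_t^{A_{-1}}u\|$ tends to $0$ as $t\searrow0$.

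For the converse, first reduce to the scalar case $U=\C$: since $Z$-admissibility for one-dimensional $U$ is what is at issue and since $A_{-1}$ zero-class $Z$-admissible as an operator on $X$ clearly forces $A_{-1}x$ to be zero-class $Z$-admissible for each fixed $x\in X$ (restrict the input to $u(\cdot)x$), it suffices to exploit the scalar statement. Next, observe that zero-class $Z$-admissibility in particular gives $Z$-admissibility, hence $\mathfrak{b}_\infty(A)=\ran A_{-1}$ is not quite available directly, but $A_{-1}$ being (finite-time) $L^\infty$-admissible — which follows from $Z$-admissibility via Proposition \ref{prop11} when $Z=E_\Phi$, and is immediate when $Z=L^\infty$, while for $Z={\rm C}$ or $Z=L^1$ one argues separately — combined with Proposition \ref{prop:b(A)} (after passing from finite-time to infinite-time using exponential rescaling, or noting the argument there only needs boundedness) shows $T$ is bounded analytic. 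Now the key step: use the zero-class property with the constant input $u\equiv 1$ to get
\[
\|T(t)x-x\|=\Bigl\|\int_0^t T_{-1}(s)A_{-1}x\,ds\Bigr\|\le \eta(t)\,\|u\|_{Z(0,t)}\,\|x\|
\]
where $\eta(t)\to0$; since $\|\mathbbm{1}_{(0,t)}\|_{Z(0,t)}$ is bounded (equal to $1$ for $L^\infty$ and ${\rm C}$, equal to $t$ for $L^1$, and $\to0$ for $E_\Phi$), we obtain $\|T(t)-I\|\le \varepsilon$ for small $t>0$. A semigroup with $\|T(t)-I\|\to0$ as $t\searrow0$ is uniformly continuous, hence has a bounded generator — this is the classical characterization of uniformly continuous semigroups (see \cite{EngelNagel,Pazy83}).

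The main obstacle is making the estimate $\|T(t)x-x\|\le \eta(t)\|x\|$ \emph{uniform in $x$}, i.e. deducing $\|T(t)-I\|_{\mathcal{L}(X)}\to0$ rather than merely strong convergence: zero-class admissibility of $A_{-1}$ \emph{as an operator} already encodes uniformity, since $\sup_{\|v\|_{Z(0,t;X)}\le1}\|\int_0^t T_{-1}(s)A_{-1}v(s)\,ds\|\to0$ and taking $v(s)=\mathbbm{1}_{(0,t)}(s)x/\|\mathbbm{1}_{(0,t)}\|_{Z(0,t)}$ with $\|x\|\le1$ yields exactly $\sup_{\|x\|\le1}\|T(t)x-x\|/\|\mathbbm{1}_{(0,t)}\|_{Z(0,t)}\to0$; the only delicate point is that for $Z=L^1$ this reads $\|T(t)-I\|\le t\cdot o(1)$ which is even stronger, while for $E_\Phi$ one has $\|\mathbbm{1}_{(0,t)}\|_{E_\Phi(0,t)}\to0$ so the bound $\|T(t)-I\|\le \|\mathbbm{1}_{(0,t)}\|_{E_\Phi(0,t)}\cdot C$ still goes to $0$. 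Thus in all four cases $\|T(t)-I\|\to0$, and $A$ is bounded.
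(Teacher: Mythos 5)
Your proof is correct and follows essentially the same route as the paper's: both obtain the substantive implication by feeding the constant input $s\mapsto x$ (i.e.\ $\mathbf{1}_{(0,t)}x$, normalized in $Z$) into the zero-class bound to get $\|T(t)-I\|\le \|\Phi_t^{A_{-1}}\|\,\|\mathbf{1}_{(0,t)}\|_{Z(0,t)}\to 0$, hence uniform continuity of $T$ and boundedness of $A$; the only differences are that the paper first reduces all four cases to $Z={\rm C}$ via the embedding of ${\rm C}(I)$ into the other spaces rather than tracking $\|\mathbf{1}_{(0,t)}\|_{Z(0,t)}$ case by case, and that your detour through bounded analyticity via Proposition~\ref{prop:b(A)} is never used and can be deleted. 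One caveat, shared with the paper's one-line treatment of the easy direction: for $Z=L^{1}$ the embedding constant of $L^{1}$ into $L^{1}$ is $1$, not $o(1)$ as your parenthetical list tacitly assumes, and in fact approximate point masses $u_{\varepsilon}=\varepsilon^{-1}\mathbf{1}_{(0,\varepsilon)}e$ give $\|\Phi_{t}^{A}\|_{\mathcal{L}(L^{1}(0,t;X),X)}\ge\|A\|$ for bounded $A$, so the $L^{1}$ case of the ``if'' direction needs separate attention (your converse direction is unaffected, since there $\|\mathbf{1}_{(0,t)}\|_{L^{1}(0,t)}=t\to0$).
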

\begin{proof}
If $A$ is bounded, then $A_{-1}$ is easily seen to be zero-class $Z$-admissible for any of the considered choices of $Z$. To show the converse, note that it suffices to consider the case where $Z$ refers to the continuous functions ${\rm C}$ as ${\rm C}(I)$ is embedded in $L^{1}(I)$ and $E_{\Phi}(I)$ for bounded intervals $I$. For $x\in X$, $\|x\|=1$ choose the constant function $u(\cdot)=x$. Then 
$$\|T(t)x-x\|_{X} = \left\| \int_{0}^{t}AT(s)u(s)\,ds\right\|_{X} \leq \|\Psi_{t}\|\|u\|_{L^{\infty}(0,t;U)}=  \|\Psi_{t}\|,$$
where $\Psi_{t}$ is defined as in Definition \ref{def:adm}. Since $\|\Psi_{t}\|\to0^{+}$ by zero-class admissibility, $T$ is uniformly continuous and hence $A$ is bounded (see e.g.\ \cite[Cor.II.1.5]{EngelNagel}).
\end{proof}
}
Proposition \ref{prop33} shows that if we could find an $L^{\infty}$-admissible $A_{-1}$ such that $A$ is unbounded, 
then $A_{-1}$ is not zero-class $L^{\infty}$-admissible and thus not $E_{\Phi}$-admissible for any Young function $\Phi$. 
We remark that in order to show the existence of such an $A_{-1}$, it suffices to find an unbounded 
generator $A$ on a reflexive space such that $A$ is an $L^{1}$-admissible observation operator. This follows from the duality results in \cite{Weiss89i} mentioned above. The difficulty is the reflexivity of the space --- examples of unbounded $L^{1}$-admissible observation operators $A$ for $X=\ell^{1}(\mathbb{N})$ can be easily constructed by diagonal operators. 
\begin{rem}\label{rem17}
Let $A$ generate a bounded analytic semigroup $T$ on a a Hilbert space $X$ and assume that $A$ has dense range. Setting our results, in particular Theorem \ref{thm2}, in context with Le Merdy's characterization of when the $L^2$-Weiss conjecture holds true, \cite{LeMerdy2003}, we arrive at the following list of equivalent conditions:
\begin{enumerate}[label=(\roman*)]
\item The $L^{2}$-Weiss conjecture holds true for $A$ and $A^{*}$ and any space $U$.
\item $T$ is similar to a contraction semigroup.
\item The $L^{\infty}$-Weiss conjecture holds true for $A$ and any finite-dimensional $U$.
\end{enumerate}
In particular this shows that the Weiss conjecture ``does not extrapolate'' in the sense that its validity for $L^2$ does not imply the validity for $L^{\infty}$.
\end{rem}

\section{Applications to input-to-state stability}\label{Sec5}
The results of Sections \ref{Sec2} and \ref{Sec3} have direct consequences for notions of input-to-state stability, because of their characterization via admissibility, see Theorem \ref{thmjnps} below. For the sake of completeness, we include the definitions for which we need the following function classes commonly used in Lyapunov theory.
\begin{align*} 
\calK ={}& \{\mu \colon [0,\infty)\rightarrow [0,\infty) \:|\: \mu(0)=0,\, \mu\text{ continuous, }  \text{strictly increasing}\},\\
 \calK_\infty ={}& \{\theta\in\calK \:|\:  \lim_{x\to\infty} \theta(x)=\infty\},\\
\mathcal{L}={}&\{\gamma \colon [0,\infty)\rightarrow[0,\infty)\:|\:\gamma \text{ continuous, } \text{strictly decreasing,} \lim_{t\to\infty}\gamma(t)=0 \},\\
\mathcal{KL} = {}&\{ \beta \colon [0,\infty)^{2}\rightarrow[0,\infty)\: | \: \beta(\cdot,t)\in\calK\ \forall t \geq 0 \text{ and }\beta(s,\cdot)\in\mathcal{L}\ \forall s> 0\}.
\end{align*}

 \begin{definition}[Input-to-state stability]
 A system $\Sigma(A,B)$ of form \eqref{eq1} is called 
 \begin{itemize}
 \item {\em input-to-state stable with respect to  $Z$} (or {\em  $Z$-ISS}), if there exist functions $\beta\in \mathcal{KL}$ and $ \mu\in {\mathcal K}_\infty$ such that for every $t\ge 0$, $x_0\in X$ and  $u\in Z(0,t;U)$,
  $$x(t) \text{ lies in }X\text{ and }
\left\| x(t)\right\| \le \beta(\|x_0\|,t)+  \mu (\|u\|_{Z(0,t;U)}).$$

\item {\em integral input-to-state stable (integral ISS)} if there exist functions  $\beta\in \mathcal{KL}$, $\theta\in {\mathcal K}_\infty$ and $\mu \in {\mathcal K}$ such that for every $t\ge 0$, $x_0\in X$ and  $u\in L^{\infty}(0,t;U)$,
  $$x(t) \text{ lies in }X\text{ and }
\left\| x(t)\right\| \le \beta(\|x_0\|,t)+ \theta \left(\int_0^t \mu (\|u(s)\|_{U})ds\right).$$
\end{itemize}
 \end{definition}

 The following result shows that in the case of linear systems (integral) ISS can be characterized by exponential stability and admissibility in certain norms.
 \begin{theorem}[\protect{\cite{JNPS16}}]\label{thmjnps}
A system $\Sigma(A,B)$ is 
\begin{itemize}
\item $Z$-ISS  if and only if $A$ generates an exponentially stable semigroup and $B$ is $Z$-admissible.
\item integral ISS if and only if $A$ generates an exponentially stable semigroup and $B$ is $E_{\Phi}$-admissible for some Young function $\Phi$.
\end{itemize}
 \end{theorem}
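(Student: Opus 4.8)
The plan is to derive both characterizations by combining the admissibility results of Sections \ref{Sec2}--\ref{Sec3} with the variation-of-parameters representation of the mild solution and the characterization of $Z$-ISS via exponential stability plus admissibility from \cite{JNPS16}. Concretely, recall from \cite{JNPS16} that for a system $\Sigma(A,B)$ of the form \eqref{eq1} the mild solution satisfies $x(t)=T(t)x_0+\Phi_t^B u$, so that the estimate defining $Z$-ISS splits into a homogeneous part governed by $T(t)x_0$ and an inhomogeneous part governed by $\|\Phi_t^B u\|$. This makes the proof essentially a matter of matching the two summands to the appropriate properties of $A$ (exponential stability) and $B$ ($Z$-admissibility, respectively $E_\Phi$-admissibility for some $\Phi$).

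For the first equivalence, I would argue as follows. If $A$ generates an exponentially stable semigroup, then $\|T(t)x_0\|\le Me^{-\omega t}\|x_0\|\le \beta(\|x_0\|,t)$ for a suitable $\beta\in\mathcal{KL}$; if in addition $B$ is $Z$-admissible then, by Definition \ref{def:adm} together with \cite[Lemma 8]{JNPS16} giving equivalence of finite- and infinite-time admissibility in the exponentially stable case, we have $\|\Phi_t^B u\|\le \|B\|_{adm}\|u\|_{Z(0,t;U)}=:\mu(\|u\|_{Z(0,t;U)})$ with $\mu\in\mathcal{K}_\infty$ (taking $\mu$ linear, or perturbing it to be strictly increasing and unbounded). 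Adding the two bounds yields $Z$-ISS. Conversely, $Z$-ISS forces $x(t)\in X$ for all inputs and all $t$, hence $\ran\Phi_t^B\subset X$, i.e.\ $B$ is $Z$-admissible; and taking $u=0$ gives $\|T(t)x_0\|\le \beta(\|x_0\|,t)$, from which exponential stability of $T$ follows by a standard uniform-boundedness plus decay argument (for each fixed $t$, $\|T(t)\|\le\beta(1,t)<\infty$, and $\beta(\cdot,t)\to 0$ gives $\|T(t_0)\|<1$ for some $t_0$, whence exponential decay).

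For the second equivalence, integral ISS is handled the same way using Proposition \ref{prop11}: if $B$ is $E_\Phi$-admissible for some Young function $\Phi$, then $\|\Phi_t^B u\|\le C\|u\|_{E_\Phi(0,t;U)}$, and by the very definition of the Luxemburg norm \eqref{orlicznorm} one bounds $\|u\|_{E_\Phi}$ in terms of $\int_0^t \Phi(\|u(s)\|_U)\,ds$ — more precisely, if $\int_0^t\Phi(\|u(s)\|)\,ds\le 1$ then $\|u\|_{E_\Phi(0,t)}\le 1$, and in general a splitting/scaling argument yields an estimate of the form $\|u\|_{E_\Phi(0,t)}\le \theta_0\bigl(\int_0^t\mu_0(\|u(s)\|)\,ds\bigr)$ with $\theta_0\in\mathcal{K}_\infty$, $\mu_0\in\mathcal{K}$ (this is essentially \cite[Prop.~2.12 and surrounding estimates]{JNPS16}). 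Combined with exponential stability this gives the integral-ISS estimate. For the converse, integral ISS again forces $\ran\Phi_t^B\subset X$ and exponential stability as before; to recover $E_\Phi$-admissibility one reverses the inequality, constructing a Young function $\Phi$ from the given $\theta\in\mathcal{K}_\infty$ and $\mu\in\mathcal{K}$ so that the integral-ISS bound implies $\|\Phi_t^B u\|\le C\|u\|_{E_\Phi(0,t;U)}$ — this is the delicate direction and is exactly the content of the characterization proved in \cite{JNPS16}.

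The main obstacle is the bookkeeping in the second equivalence: translating between the integral expression $\int_0^t\mu(\|u(s)\|)\,ds$ appearing in the definition of integral ISS and the Orlicz (Luxemburg) norm $\|u\|_{E_\Phi(0,t;U)}$, in \emph{both} directions, and in particular constructing the Young function $\Phi$ in the converse direction so that it is genuinely convex, continuous, increasing, with the correct behaviour at $0$ and $\infty$. Since the statement is quoted as \cite[Theorem]{JNPS16}, I would not reprove this from scratch but rather cite the admissibility characterizations of integral ISS established there; the role of the present theorem is simply to record the clean dichotomy $Z$-ISS $\Leftrightarrow$ exponential stability $+$ $Z$-admissibility that underpins the applications in Corollaries \ref{cor1} and \ref{cor22}. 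Thus the write-up is short: quote \cite{JNPS16} for the hard direction and supply the elementary semigroup/admissibility arguments sketched above.
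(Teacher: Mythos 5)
The paper gives no proof of this theorem at all: it is imported verbatim from \cite{JNPS16}, so the ``paper's approach'' is simply the citation, and your write-up ultimately does the same by deferring the one genuinely hard step (constructing a Young function $\Phi$ from the integral-ISS estimate so that $E_\Phi$-admissibility follows) to that reference. Your surrounding sketch --- splitting the mild solution into $T(t)x_0+\Phi_t^Bu$, reading off admissibility from $x(t)\in X$, deducing exponential stability from $\|T(t)\|\le\beta(1,t)$ via linearity and $\beta(1,\cdot)\in\mathcal{L}$, and using \cite[Lemma 8]{JNPS16} to pass to infinite-time admissibility --- is correct and consistent with how the cited result is proved.
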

 For a generalization of this theorem for non-exponentially stable semigroups, see \cite{NaSchwe17}. Using Theorem \ref{thmjnps}, Proposition \ref{prop:finitedim} has the following version.
\begin{proposition}\label{prop2:finitedim}
Let $U$ be a finite dimensional Banach space and let $Z$ be either $E_{\Phi}$, $L^1$  or $L^\infty$. Then we have for any  generator $A$ and $B\in\mathcal{L}(U,X_{-1})$ that
\begin{enumerate}[label=(\roman*)]
\item \label{prop:finitedim2}$\Sigma(A,B)$ is $Z$-ISS if and only if $\Sigma(A,Bf)$ is $Z$-ISS for any $f\in U$,
\item $\Sigma(A,B)$ is integral ISS if and only if $\Sigma(A,Bf)$ is integral ISS for any $f\in U$.
\end{enumerate}
\end{proposition}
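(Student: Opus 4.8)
The plan is to derive the statement directly from Theorem~\ref{thmjnps}, which characterizes $Z$-ISS (resp.\ integral ISS) of a linear system as exponential stability of the generated semigroup together with $Z$-admissibility (resp.\ $E_\Phi$-admissibility for some Young function $\Phi$) of the control operator, combined with Proposition~\ref{prop:finitedim}, which settles admissibility for finite-dimensional input spaces. Throughout one uses that, since the semigroup in question is exponentially stable, finite-time and infinite-time admissibility coincide, so that the admissibility notions appearing in the two cited results match.

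For part~(i), fix $Z\in\{E_\Phi,L^1,L^\infty\}$. By Theorem~\ref{thmjnps}, $\Sigma(A,B)$ is $Z$-ISS if and only if $A$ generates an exponentially stable semigroup and $B$ is $Z$-admissible, and likewise $\Sigma(A,Bf)$ is $Z$-ISS if and only if $A$ generates an exponentially stable semigroup and $Bf$ is $Z$-admissible. Since exponential stability of the semigroup is a property of $A$ alone, the equivalence claimed in part~(i) is nothing but the statement that $B$ is $Z$-admissible if and only if $Bf$ is $Z$-admissible for every $f\in U$, which is exactly Proposition~\ref{prop:finitedim}. This part is immediate.

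Part~(ii) requires one extra idea, since Theorem~\ref{thmjnps} only yields, for each $f$, \emph{some} Young function making $Bf$ admissible, and these may differ. Fix a basis $e_1,\dots,e_n$ of $U$; by hypothesis there are Young functions $\Phi_1,\dots,\Phi_n$ with $Be_k$ being $E_{\Phi_k}$-admissible. The key step is to replace them by a single one: set $\Phi:=\max_{1\le k\le n}\Phi_k$. Continuity, convexity and (strict) monotonicity are preserved under finite maxima, and, because the maximum is over finitely many terms, $\Phi(x)/x\le\sum_{k}\Phi_k(x)/x\to0$ as $x\to0$ while $\Phi(x)/x\ge\Phi_1(x)/x\to\infty$ as $x\to\infty$; hence $\Phi$ is again a Young function. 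Since $\Phi\ge\Phi_k$ pointwise, the very definition~\eqref{orlicznorm} of $\|\cdot\|_{L_{\Phi_k}}$ gives $\|u\|_{L_{\Phi_k}(0,t)}\le\|u\|_{L_\Phi(0,t)}$ for all $t>0$, and therefore a norm-decreasing inclusion $E_\Phi(0,t)\hookrightarrow E_{\Phi_k}(0,t)$. Composing this inclusion with the bounded maps $\Phi_t^{Be_k}\colon E_{\Phi_k}(0,t)\to X$ shows that each $Be_k$ is $E_\Phi$-admissible, so by Proposition~\ref{prop:finitedim} (whose proof uses only admissibility of the images of a basis, or one simply notes that $Bf$ is a finite linear combination of the $Be_k$) the operator $B$ is $E_\Phi$-admissible. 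Since $A$ also generates an exponentially stable semigroup, Theorem~\ref{thmjnps} yields that $\Sigma(A,B)$ is integral ISS. The converse runs exactly as in part~(i): integral ISS of $\Sigma(A,B)$ produces a Young function $\Phi$ with $B$, hence each $Bf$ by Proposition~\ref{prop:finitedim}, being $E_\Phi$-admissible, whence each $\Sigma(A,Bf)$ is integral ISS.

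The only nontrivial step is the merging of the finitely many Young functions via the pointwise maximum in part~(ii); everything else is a straightforward combination of Theorem~\ref{thmjnps} and Proposition~\ref{prop:finitedim}, the sole bookkeeping being to keep the finite- and infinite-time versions of admissibility aligned.
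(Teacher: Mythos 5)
Your proposal is correct and follows exactly the route the paper intends: the paper states this proposition without proof as an immediate combination of Theorem~\ref{thmjnps} and Proposition~\ref{prop:finitedim}. Your merging of the finitely many Young functions $\Phi_1,\dots,\Phi_n$ into $\Phi=\max_k\Phi_k$ (which is indeed again a Young function, and for which $\Phi\ge\Phi_k$ gives the norm-decreasing embedding $E_\Phi(0,t)\hookrightarrow E_{\Phi_k}(0,t)$) is a correct and welcome filling-in of the one detail the paper leaves implicit in part~(ii).
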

 With this we can formulate direct consequences of Theorem \ref{cor:sqfctest} and Corollary \ref{cor1}.
\begin{corollary}\label{cor22} Let $A$ generate an exponentially stable analytic semigroup $T$  on a Banach space $X$. If additionally either
\begin{enumerate}[label=(\roman*)]
\item $X$ is a Hilbert space and $T$ is similar to a contraction semigroup, or
\item  $(-A)^{\frac{1}{2}}$ and $(-A_{-1})^{\frac{1}{2}}$ are $L^{2}$-admissible (observation/control) operators,
\end{enumerate}
then $\Sigma(A,B)$ is integral ISS for any $B\in\mathcal{L}(U, X_{-1})$ with finite-dimensional $U$. In particular, the notions of integral ISS and ISS coincide for $\dim U<\infty$.
\end{corollary}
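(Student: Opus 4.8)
\textbf{Proof plan for Corollary \ref{cor22}.}
The plan is to reduce both cases to Corollary \ref{cor1} and Theorem \ref{cor:sqfctest}, combined with the admissibility characterisations of Theorem \ref{thmjnps} and the finite-dimensional reduction in Proposition \ref{prop2:finitedim}. The first observation is that in either case the semigroup $T$ is, after a possible rescaling, a \emph{bounded} analytic semigroup with $0\in\rho(A)$, since $T$ is assumed exponentially stable. In particular $A$ has dense range (indeed $A$ is boundedly invertible), so the hypotheses of the cited results are met, and $\ran A_{-1}=X_{-1}$, so that $\ran B\subset\ran A_{-1}$ holds automatically for every $B\in\mathcal{L}(U,X_{-1})$.

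In case \emph{(i)}, similarity of the bounded analytic semigroup $T$ to a contraction semigroup on the Hilbert space $X$ is (as recalled in the excerpt, via Le Merdy's theorem) equivalent to $A$ having a bounded $H^{\infty}$-calculus; hence Corollary \ref{cor1}\,(i) applies and yields a Young function $\Phi$ such that every $B\in\mathcal{L}(U,X_{-1})$ with $\dim U<\infty$ is (infinite-time) $E_{\Phi}$-admissible. In case \emph{(ii)}, the $L^{2}$-admissibility hypotheses on $(-A)^{1/2}$ and $(-A_{-1})^{1/2}$ are exactly the hypotheses of Theorem \ref{cor:sqfctest}, which again produces a Young function $\Phi$ with every such $B$ being infinite-time $E_{\Phi}$-admissible. (One should note that these cited results are phrased for \emph{bounded} analytic semigroups, so it is here that one invokes the rescaling; since $T$ is exponentially stable, finite-time $E_{\Phi}$-admissibility of $B$ for $A$ transfers to $A$ itself, and by \cite[Lemma 8]{JNPS16} finite-time and infinite-time $E_{\Phi}$-admissibility coincide for exponentially stable semigroups.) In both cases we conclude: $A$ generates an exponentially stable semigroup and $B$ is $E_{\Phi}$-admissible for some Young function $\Phi$, which by the integral-ISS characterisation in Theorem \ref{thmjnps} means precisely that $\Sigma(A,B)$ is integral ISS.

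Finally, for the last sentence, the implication integral ISS $\Rightarrow$ ISS is \eqref{iss}, which holds for all linear systems, while the converse for $\dim U<\infty$ follows from what was just proved: if $\Sigma(A,B)$ is ISS then $A$ is exponentially stable (and analytic by hypothesis), so the argument above applies verbatim to give $E_{\Phi}$-admissibility of $B$ and hence integral ISS. Thus in the present setting the two notions coincide. The main obstacle I anticipate is purely bookkeeping rather than conceptual: keeping track of the rescaling needed to pass between "exponentially stable analytic" and "bounded analytic", and making sure that the $E_{\Phi}$-admissibility obtained from the (finite-time) statements of Corollary \ref{cor1} and Theorem \ref{cor:sqfctest} is upgraded to the infinite-time version required by Theorem \ref{thmjnps} — which, as noted, is exactly where exponential stability and \cite[Lemma 8]{JNPS16} enter.
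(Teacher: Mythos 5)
Your proposal is correct and follows exactly the route the paper intends: the corollary is stated there as a direct consequence of Theorem \ref{cor:sqfctest} and Corollary \ref{cor1} combined with the admissibility characterisation of integral ISS in Theorem \ref{thmjnps} and Proposition \ref{prop2:finitedim}, and your writeup simply makes those steps explicit (including the correct observation that exponential stability gives $0\in\rho(A)$, hence dense range and $\ran A_{-1}=X_{-1}$, and that finite-time and infinite-time admissibility coincide here). The only superfluous worry is the rescaling: by the paper's own definition an exponentially stable analytic semigroup is already a bounded analytic semigroup, so Corollary \ref{cor1} and Theorem \ref{cor:sqfctest} apply directly.
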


\begin{rem}
 Corollary \ref{cor22} generalizes Theorem 4.1 in \cite{JNPS16} in the case of exponentially stable parabolic diagonal systems on Hilbert spaces, that is, operators $A$ of the form $Ae_{n}=\lambda_{n}e_{n}$, $n\in\mathbb{N}$, where $(e_{n})_{n\in\mathbb{N}}$ is a Riesz-basis of $X$ and $(\lambda_{n})_{n\in\mathbb{N}}$ lie in a suitable sector.  In fact, the generated semigroup is similar to a contraction semigroup. Conversely, if $(\lambda_{n})_{n\in\mathbb{N}}$ is an \textit{interpolating sequence}, then the Riesz-property of the basis is implied by similarity to a contraction semigroup, see  \cite{LeMerdy2003}. This shows that the assumption of the Riesz-property in \cite{JNPS16} is not necessary in general.
 \end{rem}
\begin{rem}
Corollary \ref{cor22} can be generalized to the weaker notions of `strong input-to-state stability' and `strong integral ISS' which are discussed in \cite{NaSchwe17}. In these notions, exponential stability of the semigroup is replaced by strong stability and the result follows as above by using a  generalization of Theorem \ref{thmjnps} from \cite{NaSchwe17}.
\end{rem}

\section{Discussion and Outlook}\label{Sec6}
As described in the introduction, an open question is if integral ISS is always implied by ISS for linear systems \eqref{eq1}. As mentioned before, this can be rephrased as an operator-theoretic problem.
\begin{question}\label{Q0}
\textit{Is $B\in\mathcal{B}(U,X_{-1})$ an $E_{\Phi}$-admissible control operator for some  ${\Phi}$ provided that $B$ is an $L^{\infty}$-admissible control operator?}
\end{question} This in turn can be seen as the question whether for $B\in\mathcal{L}(U,X_{-1})$, the mapping $\Phi_{t}:L^{\infty}(0,t;U)\to X$, see Definition \ref{def:adm}, can always be extended to some Orlicz space $E_{\Phi}(0,t;U)$. 
We can also formulate the dual problem.
\begin{question}\label{Q11}
\textit{Is $C\in\mathcal{B}(X_{1},Y)$  an $E_{\Phi}$-admissible observation operator for some  ${\Phi}$ provided that $C$ is an $L^{1}$-admissible observation operator?}
\end{question}

In contrast to Question \ref{Q0}, we can provide a negative answer to Question \ref{Q11}.
\begin{example}\label{prop5}
Let $X=L^{1}(0,1)$ and $T$ be the left-shift semigroup on $X$. Then $C=\delta_{0}$ is  an $L^{1}$-admissible observation operator, but $C$ is not $E_{\Phi}$-admissible for any Young function $\Phi$.
\end{example}
\begin{proof}
Since for any $f\in L^{1}(0,1)$ and $s\leq1$, we have that
$CT(s)f=f(s)$,
it follows that $C$ is $L^{1}$-admissible and not $E_{\Phi}$-admissible as $L^{1}(0,t)\supsetneq E_{\Phi}(0,t)$ for any $\Phi$.
\end{proof}
Note that Example \ref{prop5} does not yield an answer for Question \ref{Q0}: in fact, if we had an example of a generator on a reflexive space together with an $L^1$-admissible $C$ which is not $E_{\Phi}$-admissible for any $\Phi$, then the dual semigroup and $B=C^{*}$ would provide a (negative) answer for Question \ref{Q0}.

In this article we have shown that  the answer to Question \ref{Q0} is `yes' if $U$ is finite-dimensional, the semigroup is bounded analytic and strongly stable (which is equivalent to $A$ having dense range here)  on a Banach space and $A$ satisfies (``two-sided'') square function estimates of the form \eqref{eq:sqfct}. The next step is of course to ask what happens without the latter assumption. In particular, we ask if the implication still holds true for any analytic semigroup on a Hilbert space.  It is well-known that on every (separable) Hilbert space, a bounded analytic semigroup with generator $A$ can be constructed such that $A$ does not have a bounded $H^{\infty}$-calculus, that is $A$  does not satisfy square function estimates of the above type, \cite{BaillonClement91,HaaseBook,mcintoshHinfNo}. The question remains whether for such operators the sets $\mathfrak{B}_{\infty,U}(A)$ and $$\mathfrak{B}_{\text{Orlicz},U}(A)=\{B\in\mathcal{L}(U, X_{-1})\colon B \text{ is }E_{\Phi}\text{-admissible for some }\Phi\}$$ still coincide for $U$ is finite-dimensional. By Proposition \ref{prop:finitedim}, it again suffices to consider $U=\C$.
\begin{question}\label{Q1}
Let $A$ generate an exponentially stable analytic semigroup on a Hilbert space. Suppose that the $H^{\infty}$-calculus for $A$ is not bounded.  Does Question \ref{Q0} have a positive answer, i.e.\ does the following implication hold
$$b\in X_{-1} \text{ is }L^{\infty}\text{-admissible} \quad \implies\quad \exists \Phi: b \text{ is }E_{\Phi}\text{-admissible} \quad?$$
\end{question}
By Theorem \ref{thm2}, it is clear that in the situation of Question \ref{Q1}, $\mathfrak{b}_{\infty}(A)\subsetneq X_{-1}$. Because of this strict inclusion, the space $\mathfrak{b}_{\infty}(A)$ is hard to characterize which makes it difficult to investigate the question. However, we have to emphasize that the condition of a bounded $H^{\infty}$-calculus is not very restricting in  practice when working with specific pde's as it is known to hold true for a large class of operators, including most differential operators, see e.g.\ \cite[Sec.~3]{WeisSurvey} for a survey. Therefore, Question \ref{Q1} is rather of theoretic interest.

On the other hand, when leaving the Hilbert space setting, there is a well-known subtlety concerning the relation of a bounded $H^{\infty}$-calculus and square function estimates. In general, classical square functions of the form \eqref{eq:sqfct}
are only sufficient but not necessary for a bounded calculus, see e.g.\ \cite{cowlingdoustmcintoshyagi}. This issue, however, can be overcome by using generalized square function estimates that were first introduced only for $L^{p}$ spaces and later, in a seminal paper by Kalton and Weis \cite{KaltonWeisUnpublished}, generalized to general Banach spaces. For $X=L^{p}(\Omega,\mu)$, $1\leq p<\infty$, the corresponding condition for $(-A)^{\frac{1}{2}}$ reads 
\begin{equation*}
\exists K>0,\forall f\in L^{p}(\Omega,\mu):\ \left\|\left(\int_{0}^{\infty}\left|((-A)^{\frac{1}{2}}T(t)f)(\cdot)\right|^{2}\,dt\right)^{\frac{1}{2}}\right\|_{L^{p}(\Omega,\mu)} \leq K\|f\|_{L^{p}(\Omega,\mu)},
\end{equation*}
which is consistent with  $L^2$-admissibility in the case $p=2$. It is not hard to see that this condition together with its dual version can be used  to derive that any $b\in X_{-1}$ is $L^{\infty}$-admissible along the same lines as in the proof of Theorem \ref{cor:sqfctest}. Moreover, one even gets zero-class $L^{\infty}$-admissibility. However, the difference is that it is not clear if an Orlicz-space norm can be recovered as in the case for classical $L^{2}$-admissibility. 
\begin{question}
Let $X=L^{p}(\Omega,\mu)$ and let $A$ generate an exponentially stable analytic semigroup such that $A$ has a bounded $H^{\infty}$-calculus.
Does the implication
$$b\in X_{-1} \quad \implies\quad \exists \Phi: b \text{ is }E_{\Phi}\text{-admissible} \quad$$
hold true?
\end{question}
In the case that $L^{p}(\Omega,\mu)=\ell^{p}(\N)$ this was proved in \cite[Thm.~4.1]{JNPS16} using a direct method without (explicitly) using the boundedness of the calculus. Also note that Fackler \cite{FacklerLp} recently provided an $L^p$-version of the Le Merdy--Grabowski--Callier result linking the boundedness of the $H^{\infty}$-calculus to a corresponding positive, contractive semigroup, see also \cite{Weis01}.

This article was mainly concerned with parabolic pde's, but it is an interesting question what happens for more general semigroups.  In particular, it is important to investigate the case of contraction semigroups, where, in the Hilbert space case, boundedness of the functional calculus is guaranteed by von Neumann's inequality. However, at this point it is unclear how (and if at all) the latter property is useful when studying Question \ref{Q0}. This is subject to future work.

Finally, we want to study the consequences of the derived results for nonlinear (parabolic) pde's; starting with relatively simple semi-linear equations. Although the notion of admissibility is often perceived as restricted to linear problems, the relation to ISS, a nonlinear concept, seems promising to achieve this step. {\color{black} On the other hand, as mentioned in the introduction, there are recent results by Mironchenko and Wirth \cite{MiWi17} for a class of (nonlinear) systems which the authors call {\it forward complete dynamical (control) systems}, where in particular --- and crucially --- trajectories are required to be continuous. Our results, e.g.\ Theorem \ref{thm11}, provide characterizations of this latter property which moreover implies that the considered parabolic systems then indeed fall into that class. This probably opens the way to use ISS features, such as Lyapunov functions, which becomes particularly interesting in the transition to above mentioned nonlinear parabolic problems, see e.g.\ \cite{ZhengISS} where a semilinear 1-D heat equation is studied.}

\section*{Acknowledgements}
The authors are very grateful to Jonathan R.~Partington for fruitful discussions on the content of the article. They would like to thank Bernhard Haak for helpful comments on the Weiss conjecture for $L^p$. The first and the second named author received funding from the {\it Deutsche Forschungsgemeinschaft (DFG)}, grant number JA 735/12-1. The second named author is further supported by the DFG, grant number RE 2917/4-1. The support is greatly acknowledged. 
{\color{black} Finally the authors are very thankful to the anonymous referees for their careful reading and helpful suggestions.}
\appendix
\section{A technical lemma}
{\color{black}
\begin{lemma}\label{lem:app1}
Let $f\in L^{1}(0,\infty)$ and $f(\tau+\cdot)\in L^{\infty}(0,\infty)$ for some $\tau>0$. Then there exists a Young function $\Phi$ such that $\Phi(|f(\cdot)|)$ is integrable.  Thus, $f\in L_{\Phi}(0,\infty)$. \end{lemma}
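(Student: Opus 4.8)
The plan is to split the half-line at $\tau$, to dispose of the tail $(\tau,\infty)$ cheaply by combining boundedness with convexity, and then on the \emph{finite-length} interval $(0,\tau)$ to run the classical de la Vallée Poussin construction, producing one Young function $\Phi$ that serves on all of $(0,\infty)$.

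First I would set $g:=|f|\ge0$, so that $g\in L^{1}(0,\infty)$ and $g\le M:=\|f(\tau+\cdot)\|_{L^{\infty}(0,\infty)}$ a.e.\ on $(\tau,\infty)$; we may assume $M>0$, the case $M=0$ being trivial. For \emph{every} Young function $\Phi$, convexity together with $\Phi(0)=0$ yields $\Phi(g(s))=\Phi\big(\tfrac{g(s)}{M}\,M\big)\le\tfrac{g(s)}{M}\,\Phi(M)$ a.e.\ on $(\tau,\infty)$, hence
\begin{equation*}
\int_{\tau}^{\infty}\Phi(g(s))\,ds\le\frac{\Phi(M)}{M}\,\|g\|_{L^{1}(\tau,\infty)}<\infty .
\end{equation*}
Thus the whole problem reduces to producing \emph{one} Young function $\Phi$ with $\int_{0}^{\tau}\Phi(g(s))\,ds<\infty$; such a $\Phi$ then also handles the tail by the displayed estimate.

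On the bounded interval $(0,\tau)$ I would argue as follows. Since $g\in L^{1}(0,\tau)$, absolute continuity of the integral gives $\int_{\{g>n\}}g\,ds\to0$ as $n\to\infty$, so one may choose integers $1\le n_{1}<n_{2}<\cdots$ with $\int_{\{g>n_{k}\}}g\,ds\le2^{-k}$ for all $k\ge1$. Let $\phi\colon[0,\infty)\to[0,\infty)$ be the nondecreasing function defined by $\phi(t)=\min\{t,1\}$ for $t\in[0,n_{1}]$ and $\phi(t)=k+1$ for $t\in(n_{k},n_{k+1}]$, $k\ge1$, and put $\Phi(x):=\int_{0}^{x}\phi(t)\,dt$. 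Then $\Phi$ is continuous, convex and strictly increasing on $[0,\infty)$, and since $\Phi(x)/x\le\phi(x)\to0$ as $x\to0^{+}$ and $\Phi(x)/x\ge\tfrac12\phi(x/2)\to\infty$ as $x\to\infty$, $\Phi$ is a Young function. Moreover $\Phi(x)\le x\,\phi(x)$, so $\Phi(x)\le x$ for $x\le n_{1}$ and $\Phi(x)\le(k+1)x$ for $x\in(n_{k},n_{k+1}]$. Splitting $(0,\tau)$ (up to a null set) into $\{g\le n_{1}\}$ and the sets $\{n_{k}<g\le n_{k+1}\}$, $k\ge1$, and using $\{n_{k}<g\le n_{k+1}\}\subset\{g>n_{k}\}$, we get
\begin{align*}
\int_{0}^{\tau}\Phi(g(s))\,ds
&=\int_{\{g\le n_{1}\}}\Phi(g)\,ds+\sum_{k\ge1}\int_{\{n_{k}<g\le n_{k+1}\}}\Phi(g)\,ds\\
&\le\|g\|_{L^{1}(0,\tau)}+\sum_{k\ge1}(k+1)\int_{\{g>n_{k}\}}g\,ds
\le\|g\|_{L^{1}(0,\tau)}+\sum_{k\ge1}(k+1)2^{-k}<\infty .
\end{align*}

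Combining the two bounds gives $C:=\int_{0}^{\infty}\Phi(|f(s)|)\,ds<\infty$, which is the first assertion. For the last assertion, convexity and $\Phi(0)=0$ yield $\Phi(x/k)\le k^{-1}\Phi(x)$ for $k\ge1$, hence $\int_{0}^{\infty}\Phi(|f(s)|/k)\,ds\le C/k\le1$ once $k\ge\max\{1,C\}$, so $\|f\|_{L_{\Phi}(0,\infty)}\le\max\{1,C\}$ and $f\in L_{\Phi}(0,\infty)$. I do not expect a genuine obstacle: the substance is the standard de la Vallée Poussin argument, carried out here on the finite-measure piece $(0,\tau)$, and the only point that needs a little care is the behaviour of $\Phi$ near the origin --- one has to keep $\Phi$ positive for positive argument while still forcing $\Phi(x)/x\to0$, which is exactly the role of the $\min\{t,1\}$ term in $\phi$. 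The boundedness hypothesis on $f$ near infinity is used only to trivialize the tail contribution (and thereby to confine the construction to a bounded interval).
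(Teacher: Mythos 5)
Your proof is correct and follows essentially the same route as the paper: split the half-line at $\tau$, obtain a Young function for the integrable part on the bounded interval via the de la Vall\'ee Poussin construction, and control the tail using that $\Phi(x)/x$ is bounded on $[0,M]$ (you get this from convexity, the paper from continuity and $\lim_{x\to0^+}\Phi(x)/x=0$ --- the same estimate). The only difference is that you carry out the de la Vall\'ee Poussin construction explicitly where the paper cites \cite[Thm.~3.2.5]{Kufner}, which makes your version self-contained but not substantively different.
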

\begin{proof}
By 
\cite[Thm.~3.2.5]{Kufner} 
there exists a Young function $\Phi$ such that 
the assertion of the lemma holds in the case that $f$ has bounded essential support (this only relies on the assumption that $f\in L^{1}(0,\infty)$). For the general case consider
$$\int_{0}^{\infty}\Phi(|f(s)|)\,ds=\int_{0}^{\tau}\Phi(|f(s)|)\,ds+\int_{\tau}^{\infty}\frac{\Phi(|f(s)|)}{|f(s)|}|f(s)|\,ds.$$
The first term on the right-hand-side is finite by the first case. For the second term, note that $\frac{\Phi(x)}{x}$ is bounded on any bounded subinterval of $[0,\infty)$, which follows from continuity of $\Phi$ and $\lim_{x\to0^{+}}\frac{\Phi(x)}{x}=0$. Hence, as $f\in L^{\infty}(\tau,\infty)$ and $L^{1}(0,\infty)$, the claim follows.
\end{proof}
}

\def\cprime{$'$}



\end{document}